\newtheorem{thm}{Theorem}[section]
\newtheorem{lma}[thm]{Lemma}
\newtheorem{defn}[thm]{Definition}
\newtheorem{prop}[thm]{Proposition}
\newtheorem{conj}[thm]{Conjecture}
\newtheorem{rem}[thm]{Remark}
\newcommand{\R}{\mathbb{R}}
\newcommand{\N}{\mathbb{N}}
\providecommand{\norm}[1]{\lVert#1\rVert}
\newcommand{\I}{\mathcal{I}}
\newcommand{\J}{\mathcal{J}}
\newcommand{\A}{\mathcal{A}}
\newcommand{\F}{\mathcal{F}}
\newcommand{\G}{\mathcal{G}}
\renewcommand{\i}{\mathtt{i}}
\renewcommand{\j}{\mathtt{j}}
\renewcommand{\k}{\mathtt{k}}
\renewcommand{\r}{\mathbf{r}}
\newcommand{\hd}{\dim_\textup{H}}
\newcommand{\bd}{\dim_\textup{B}}
\begin{document}
\title[Dimension spectrum of self-affine IFS]{Dimension spectrum of infinite self-affine iterated function systems}
\author{Natalia Jurga}  \address{Mathematical Institute, St Andrews, Scotland, KY16 9SS}
\email{naj1@st-andrews.ac.uk}

\begin{abstract}
  Given an infinite iterated function system (IFS) $\mathcal{F}$, we define its dimension spectrum $D(\mathcal{F})$ to be the set of real numbers which can be realised as the dimension of some subsystem of $\mathcal{F}$. In the case where $\mathcal{F}$ is a conformal IFS, the properties of the dimension spectrum have been studied by several authors. In this paper we investigate for the first time the properties of the dimension spectrum when $\mathcal{F}$ is a non-conformal IFS. In particular, unlike dimension spectra of conformal IFS which are always compact and perfect (by a result of Chousionis, Leykekhman and Urba\'{n}ski, Selecta 2019), we construct examples to show that $D(\mathcal{F})$ need not be compact and may contain isolated points.
\\ \\
\emph{Mathematics Subject Classification} 2010:  primary: 28A80, 37C45, 37D35  secondary: 15A18.
\\ \\
\emph{Key words and phrases}: iterated function system, self-affine set, dimension spectrum, Hausdorff dimension
\end{abstract}

 \maketitle

 \section{Introduction}

Let $\F=\{S_i: [0,1]^d \to [0,1]^d \}_{ i \in \N}$ be a countable family of Euclidean differentiable contractions whose contraction ratios are uniformly bounded above by some constant $\alpha<1$. We call $\F$ an (infinite) iterated function system (IFS). For each non-empty $\I \subset \N$, one can construct a set $F_\I= \bigcup_{i \in \I} S_i F_\I$, which we call the attractor of $\{S_i:[0,1]^d \to [0,1]^d\}_{ i \in \I}$; see section \ref{sa prel} for its construction. Let $\hd F$ denote the Hausdorff dimension of a set $F \subset \R^d$. The Hausdorff dimension spectrum $D(\F)$ is defined as
$$D(\F):=\{ s \in \R: \hd F_\I=s \; \textnormal{for some} \; \I \subset \N \},$$
that is, the set of dimensions which can be realised by the attractors of subsystems of $\F$. It is easy to see that the dimension spectrum of a finite IFS consists of a finite collection of points. However, when the IFS is taken to consist of infinitely many maps as above, the structure of its dimension spectrum becomes incredibly complex and intriguing. 

Interest in the structure of dimension spectra of infinite iterated function systems began with the so-called \emph{Texan conjecture} concerning the density of the dimensions of bounded type continued fraction sets in $[0,1]$. Recall that for any irrational $x \in (0,1)$ there exists a unique sequence of digits $a_n(x) \in \N$ such that
$$x= \cfrac{1}{a_1(x)+\cfrac{1}{a_2(x)+\cfrac{1}{\cdots}}}.$$
Given $\I \subset \N$ denote
$$E_\I=\{x \in (0,1): a_n(x) \in \I \quad \forall n \in \N\}.$$
Hensley \cite{hensley} and Mauldin and Urba\'{n}ski \cite{mu_conj} independently made the following conjecture, which became known as the Texan conjecture \cite{jenkinson}, due to the fact that they were all at Texan institutions at the time.

\begin{conj}
The set $\{\hd E_\I: \I \subset \N \; \textnormal{finite}\}$
is dense in $[0,1]$.
\end{conj}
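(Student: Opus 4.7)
The plan is to exploit the conformal structure of the Gauss IFS $\{T_i(x) = 1/(i+x)\}_{i \in \N}$ and its associated thermodynamic formalism, reducing the density question to a quantitative analytic problem about zeros of pressure functions. By Bowen's formula (in the form of Mauldin and Urba\'nski for countable Markov systems), for every finite $\I \subset \N$ the dimension $\hd E_\I$ is the unique $s \geq 0$ satisfying the pressure equation $P_\I(s) = 0$, where $P_\I(s)$ is the topological pressure of $-s \log|T'|$ on the shift $\I^{\N}$. This places the problem inside a perturbative framework in which $\hd E_\I$ depends smoothly on the generator set.

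Two auxiliary facts would drive the argument. \emph{First}, a continuity-at-infinity estimate: for every finite $\I \subset \N$ and every $\delta > 0$ there exists $N$ such that $\hd E_{\I \cup \{m\}} - \hd E_\I < \delta$ whenever $m \geq N$. This follows because $\|T_m'\|_\infty \asymp m^{-2}$, so adjoining the generator $T_m$ perturbs $P_\I(s)$ by only $O(m^{-2s})$; the implicit function theorem applied to the pressure equation then transfers this smallness to its zero. \emph{Second}, $\sup\{\hd E_\I : \I \subset \N \text{ finite}\} = 1$, a consequence of the Mauldin--Urba\'nski approximation theorem combined with $\hd E_{\N} = 1$.

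Given a target $t \in (0,1)$ and a tolerance $\epsilon > 0$, I would construct an appropriate $\I$ greedily. Initialise $\I_0 = \{1\}$ (so $\hd E_{\I_0} = 0$), and while $\hd E_{\I_k} < t - \epsilon$, invoke the continuity lemma to choose $m_{k+1} \notin \I_k$ large enough that $\hd E_{\I_k \cup \{m_{k+1}\}} < \hd E_{\I_k} + \epsilon$, then set $\I_{k+1} = \I_k \cup \{m_{k+1}\}$. If the procedure halts at some stage $k$, monotonicity together with the increment bound forces $\hd E_{\I_k} \in [t-\epsilon, t)$, completing the proof for that target.

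The main obstacle is ruling out an infinite non-halting run, for in that case $\hd E_{\I_k} \uparrow s_\infty \leq t - \epsilon < 1$ and no contradiction is immediate. To engineer one, the scheduling of the additions $m_{k+1}$ must be arranged so that the limit set $\I_\infty = \bigcup_k \I_k$ has dimension $1$; a natural device is to let $m_{k+1}$ be the least integer satisfying the continuity constraint yet not already in $\I_k$, and then to verify that this recipe captures every sufficiently large integer, making $\I_\infty$ cofinite and hence $\hd E_{\I_\infty} = 1$ by a Cusick-type result for the Gauss system. The Mauldin--Urba\'nski approximation theorem then gives $\hd E_{\I_k} \uparrow \hd E_{\I_\infty} = 1$, contradicting $s_\infty < 1$. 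The delicate point is the simultaneous enforcement of the per-step increment bound (which pushes $m_{k+1}$ to be very large) and of the exhaustion of $\N$ by $\I_\infty$ (which pushes $m_{k+1}$ to fill in gaps). Balancing these two competing demands is, in my view, the genuine heart of the argument.
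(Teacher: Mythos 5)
Your overall framework --- Bowen's formula, a tail estimate showing that adjoining a large generator $m$ perturbs the dimension by $O(m^{-2s})$, a greedy construction, and the Mauldin--Urba\'nski approximation theorem $\hd E_\I=\lim_{n}\hd E_{\I\cap\{1,\ldots,n\}}$ --- is exactly the right toolkit, and it is the one Kesseb\"ohmer and Zhu use (the paper itself does not reprove the conjecture; it cites their result $D(\F)=[0,1]$ and deduces density from the approximation theorem). However, the step you yourself flag as ``the genuine heart of the argument'' is a real gap, and the device you propose to close it fails for two separate reasons. First, it is false that a cofinite $\I_\infty$ forces $\hd E_{\I_\infty}=1$: for the Gauss system the equilibrium state at $s=1$ (the Gauss measure) is fully supported, so deleting even a single digit strictly decreases the pressure at $s=1$ and hence $\hd E_{\N\setminus\{j\}}<1$ for every $j$ (this strict monotonicity on proper subsystems is in Mauldin--Urba\'nski). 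So cofiniteness of $\I_\infty$ yields no contradiction with $s_\infty\le t-\epsilon$ when $t$ is close to $1$. Second, the rule ``take the least $m\notin\I_k$ with increment $<\epsilon$'' cannot capture every sufficiently large integer in the sense you need, because the marginal contribution of a fixed small digit does not tend to $0$ as the ambient alphabet grows: digit $2$, say, carries a definite positive proportion of the pressure (its cylinder has Gauss measure $\approx 0.17$), so $\hd E_{\I\cup\{2\}}-\hd E_{\I}$ stays bounded away from $0$ uniformly in $\I\not\ni 2$, and digit $2$ is never added when $\epsilon$ is small. Thus $\I_\infty$ omits exactly the digits needed to reach targets $t$ near $1$.

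The repair is to change the greedy criterion from a bound on the \emph{increment} to a bound on the \emph{resulting dimension}: add at each stage the least digit $n\notin\I_k$ with $\hd E_{\I_k\cup\{n\}}\le t$ (the tail estimate guarantees the set of admissible digits is nonempty, indeed cofinite). In the non-terminating case one does not argue via cofiniteness at all; instead, if $\hd E_{\I_\infty}<t$, the tail estimate gives $\hd E_{\I_\infty\cup\{N,N+1,\ldots\}}\le t$ for large $N$, so some digit in $\{1,\ldots,N-1\}\setminus\I_\infty$ was admissible at infinitely many stages and should have been selected by minimality --- a contradiction, whence $\hd E_{\I_\infty}=t$ and the finite truncations approximate $t$ by the approximation theorem. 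This is precisely the mechanism of Lemma 4.1 (compactness) in the present paper, transplanted to the affine setting; consulting that proof will show you how the minimality bookkeeping (the sets $G_i$ and digits $k_i$) replaces your cofiniteness argument.
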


Kesseb\"{o}hmer and Zhu \cite{kess} proved the Texan conjecture by showing that the dimension spectrum of the iterated function system $\F=\{S_ix=\frac{1}{x+i}\}_{i \in \N}$ is the closed unit interval $D(\F)=[0,1]$. In particular, 
the density of $\{\hd E_\I: \I \subset \N \; \textnormal{finite}\}$ in $[0,1]$ follows from combining this with the fact that for any $\I \subset \N$, $\hd E_\I= \lim_{n \to \infty} E_{\I \cap \{1, \ldots, n\}}$ and because $E_\I$ is the attractor of the IFS $\{S_i \}_{ i \in \I}$. In addition to proving the Texan conjecture, Kesseb\"{o}hmer and Zhu conducted the first study of dimension spectra of conformal IFS (where all of the maps $S_i$ are conformal), highlighting the relationship between the structure of the spectrum (such as whether it is equal to an interval, a finite union of intervals, a Cantor set etc.) and the decay properties of the sequence of contraction ratios of the maps belonging to the IFS. Since the work of Kesseb\"{o}hmer and Zhu, the dimension spectra of conformal IFS have been further studied in \cite{perfect, rigorous, das_simmons, ghenciu1, ghenciu2, ghenciu_munday}. The conformal IFS in these papers are always assumed to satisfy the \emph{bounded distortion property}: there exists $C>0$ such that for all $n \in \N$, $i_1, \ldots, i_n \in \N$ and all $x,y \in [0,1]^d$
$$C^{-1} \leq \left| \frac{(S_{i_1} \circ \cdots\circ S_{i_n})'(x)}{(S_{i_1} \circ \cdots \circ S_{i_n})'(y)}\right| \leq C.$$
The conformal IFS in these papers are also always assumed to satisfy the open set condition (OSC), which controls the amount of overlap between maps in the IFS. Another (stronger) separation condition we will refer to is the strong open set condition (SOSC), and we will define both conditions in the next section. In the absence of suitable separation conditions, dimension spectra are not yet understood although there are examples to suggest that they have different behaviour, see for instance \cite[p7]{das_simmons}.

In this paper, we will be interested in the topological properties of the dimension spectrum. Chousionis, Leykekhman and Urbański  \cite{perfect} recently proved the following.

\begin{thm}[Theorem 1.2 \cite{perfect}] \label{clu-main}
The dimension spectrum of a conformal IFS satisfying the OSC and bounded distortion property is always compact and perfect.
\end{thm}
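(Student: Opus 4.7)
The plan is to establish compactness and perfectness separately, using three main tools: Bowen's pressure formula characterising $\hd F_\I$ as the unique zero of an appropriate pressure function; the approximation property $\hd F_\I = \sup\{\hd F_\J : \J \subset \I,\ \J \text{ finite}\}$ (a consequence of OSC and BDP, due to Mauldin--Urba\'{n}ski); and the fact that under OSC in $\R^d$ the contraction ratios $r_i$ of the $S_i$ must satisfy $r_i \to 0$ (otherwise disjoint images $S_i(U)$ inside the OSC open set $U$ would have uniformly positive $d$-volume, contradicting boundedness of $U$).

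For compactness, boundedness is immediate from $D(\F) \subseteq [0,d]$. For closedness, suppose $s_n \to s$ with $s_n = \hd F_{\I_n} \in D(\F)$; I would construct a single $\I$ with $\hd F_\I = s$. Using the approximation property I first replace each $\I_n$ by a finite $\J_n \subset \I_n$ with $\hd F_{\J_n}$ close to $s_n$, so without loss of generality $\hd F_{\J_n} \to s$. Then I build $\I = \bigcup_k \mathcal{K}_k$ as an increasing union of finite subsystems, chosen inductively so that $\hd F_{\mathcal{K}_k} < s$ and $\hd F_{\mathcal{K}_k} \to s$: at step $k$, I adjoin to $\mathcal{K}_{k-1}$ a suitable finite portion of some $\J_n$ together with a handful of indices with very small contraction drawn from the tail of $\N$, using Bowen's equation and BDP to guarantee $s - 1/k < \hd F_{\mathcal{K}_k} < s$. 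The approximation property then forces $\hd F_\I = s$.

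For perfectness, any $s = \hd F_\I \in D(\F)$ must be shown to be non-isolated in $D(\F)$. If $\I$ is infinite, the approximation property directly yields a strictly increasing sequence of finite subsystems $\J_n \subset \I$ with $\hd F_{\J_n} \nearrow s$ and $\hd F_{\J_n} < s$ for each $n$ (by strict monotonicity of dimension under adjunction of a non-trivial map). If $\I$ is finite, I use $r_j \to 0$: for $j \notin \I$, strict monotonicity gives $\hd F_{\I \cup \{j\}} > s$, while continuity of the solution of Bowen's equation in the parameters forces $\hd F_{\I \cup \{j\}} \to s$ as $j \to \infty$, producing a sequence in $D(\F) \setminus \{s\}$ converging to $s$ from above.

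The main obstacle is the closedness step. A purely greedy construction --- including $i$ in $\mathcal{K}_k$ whenever $\hd F_{\mathcal{K}_{k-1} \cup \{i\}} \leq s$ --- only yields $\hd F_\I \leq s$ and may undershoot. The difficulty is to interleave \emph{productive} contributions from the near-optimal $\J_n$ (which push the running dimension up towards $s$) with \emph{safe} small-contraction fillers (which guarantee the dimension stays below $s$, exploiting that adjoining a map of contraction $r$ perturbs $\hd F$ by at most $O(r^s)$). The required quantitative control comes precisely from BDP via the implicit function theorem applied to Bowen's equation; this is where both OSC and BDP are used in an essential way, and explains why the theorem can fail once these hypotheses are dropped, as the later constructions in this paper will demonstrate.
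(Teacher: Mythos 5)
First, note that in this paper the statement is quoted from Chousionis--Leykekhman--Urba\'{n}ski and not proved here; the closest in-paper analogues are Lemmas \ref{compact} and \ref{perfect}, whose compactness proof follows the CLU strategy. Your perfectness argument is essentially correct and matches that strategy (finite truncations from below for infinite $\I$; adjoining a single map $S_j$ with $r_j\to 0$ for finite $\I$, with the pressure perturbation bound giving $\hd F_{\I\cup\{j\}}\searrow \hd F_\I$). The problem is in your closedness argument.

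The gap is your claim that, given $s_n=\hd F_{\I_n}\to s$, you can build an increasing union $\I=\bigcup_k\mathcal{K}_k$ of finite sets with $s-1/k<\hd F_{\mathcal{K}_k}<s$. This is false in general: $s$ need not be approachable from below within $D(\F)$ at all. Take a self-similar IFS on $[0,1]$ with $r_1=r_2=r_3=\tfrac13-\delta$ and $r_j$ decaying extremely fast for $j\ge 4$ (say $r_j=2^{-2^j}$), and let $s=\hd F_{\{1,2,3\}}$. Any finite $\J$ not containing all of $1,2,3$ satisfies $\sum_{i\in\J}r_i^t\le 2r_1^t+(\text{tiny tail})<1$ for all $t$ near $s$, so $\hd F_\J$ is bounded away from $s$ from below; your ``safe fillers'' cannot bridge the gap, and any set containing $\{1,2,3\}$ plus anything else overshoots. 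Yet $s=\lim_n \hd F_{\{1,2,3,n\}}$ is a genuine limit point of $D(\F)$, realised only by $\I=\{1,2,3\}$ itself. Your construction, which insists on the strict inequality $\hd F_{\mathcal{K}_k}<s$ at every stage, can never produce this $\I$. The correct argument (as in \cite{perfect}, and mirrored in Lemma \ref{compact} here) splits into two cases: when the tail pressure at $s$ is finite one must \emph{extract} $\I$ diagonally from the sequence $\I_n$ (choosing digits that persist in infinitely many $\I_n$ and proving $\hd F_\I=s$ by a two-sided contradiction argument), and only when the tail pressure at $s$ is infinite does a greedy increasing-union construction of the kind you describe go through, because there one can always raise the pressure towards $1$ in arbitrarily small increments without getting stuck or overshooting. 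As it stands, your proof establishes closedness only in the second regime.
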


In other words, the dimension spectrum of a conformal IFS is always compact with no isolated points. Moreover, Chousionis, Leykekhman and Urbański initiated the investigation into which kinds of compact and perfect sets could be realised as the dimension spectrum of some conformal IFS through asking a number of questions in this direction (see \cite[p4]{perfect}) as well as conjecturing that any compact and perfect subset of $[0, \infty)$ (containing 0) could be realised as the dimension spectrum of some conformal IFS. Das and Simmons \cite{das_simmons} disproved this conjecture by constructing a compact and perfect subset of $[0, \infty)$ containing 0 whose dimension spectrum is not realised by any conformal IFS on $\R^d$. They also constructed an example of a conformal IFS whose dimension spectrum is not uniformly perfect, answering another question from \cite{perfect}.

So far, the study of dimension spectra has been limited to the conformal setting. It is well-known that the dimension theory of non-conformal IFS is considerably more complex than the dimension theory of conformal IFS, and consequently exhibits many new phenomena \cite{gelfert,cp}. Hence, it is natural to ask how this affects the structure of the dimension spectrum. In this paper we investigate for the first time the properties of dimension spectra in the non-conformal setting, by studying the dimension spectra of infinite self-affine IFS. An infinite self-affine IFS is an IFS $\{S_i:[0,1]^d \to [0,1]^d\}_{i \in \N}$ where each map $S_i(\cdot)=A_i(\cdot)+t_i$ for a contraction $A_i \in \mathcal{GL}_d(\R)$ and a translation $t_i \in \R^d$. In particular they provide the simplest models for non-conformal IFS and their dimension theory is a topic of active contemporary research \cite{bhr,bm,das-simmons,fs,morris_shmerkin}.

There are several major obstacles to extending the conformal theory of dimension spectra to the non-conformal setting. Firstly, although \emph{finite} self-affine IFS have received considerable attention over the past few decades, the theory of self-affine \emph{infinite} IFS is still not very well understood, unlike the very well developed theory of conformal infinite IFS \cite{mu_book}. To the author's knowledge, \cite{kaenmaki_reeve} and \cite{reeve} are the only references for this topic. Secondly, in order to study the structure of dimension spectra it is necessary to make good upper and lower estimates on
\begin{eqnarray}
\hd F_\I - \hd F_{\I \setminus \{n\}} \label{dim_change}
\end{eqnarray} for all $\I \subset \N$ and all $n \in \I$. In the conformal setting, this is aided by the multiplicativity of the derivative which appears in the appropriate pressure functions. However, within the appropriate pressure functions that arise in the self-affine setting, the derivative is replaced by a `singular value function' which is in general only submultiplicative but \emph{not} multiplicative, meaning that good upper and lower estimates on (\ref{dim_change}) are difficult to make. Indeed, the question of whether generically (\ref{dim_change}) is strictly positive was only settled in the case $d=3$ recently by Morris and K\"{a}enm\"{a}ki \cite{morris_kaenmaki} and in higher dimensions by Bochi and Morris \cite{bm}. Finally, unlike the conformal setting, the Hausdorff dimension of self-affine sets is not given by a universal formula, and two distinct streams of dimension theory have evolved separately. The Hausdorff dimension of a generic self-affine set is given as the root of a suitable pressure function \cite{hr, bhr, falconer} whereas the Hausdorff dimension of an `exceptional' self-affine set (whenever its value is known) is usually given in terms of a variational principle \cite{baranski, lg}. Since the subsystems of $\F$ can fall into either category, the dimension spectrum of an infinite self-affine IFS cannot simply be described by the roots of a single family of pressure functions, and so in order to understand the global structure of the dimension spectrum of an arbitrary self-affine IFS one must take into consideration both the generic and exceptional theory simultaneously.


Let $\mathcal{F}=\{A_i(\cdot)+t_i\}_{i \in \N}$ be a self-affine IFS. We let $\A=\{A_i\}_{i \in \N}$ denote the linear parts of the maps in $\mathcal{F}$. Given $\I \subset \N$, we say that the set of matrices $\{A_i\}_{i \in \I}$ (or simply the self-affine IFS $\{S_i\}_{i \in \I}$) is \emph{irreducible} if the matrices do not all preserve a common proper non-trivial linear subspace. We say that they are \emph{strongly irreducible} if they do not all preserve a common finite union of proper non-trivial linear subspaces. We begin by showing that if we impose some irreducibility assumptions on the subsystems of $\mathcal{F}$, then the dimension spectrum $D(\mathcal{F})$ is compact and perfect, as in the conformal setting.

\begin{thm} \label{cp}
Let $\F$ be a planar self-affine IFS satisfying the SOSC and suppose that any subset of $\A$ is strongly irreducible. Then $D(\F)$ is compact and perfect.
\end{thm}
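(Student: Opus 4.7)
\emph{Proof plan.} The starting point is a dimension formula valid for every subsystem. For each finite $\I\subset\N$, the Bárány--Hochman--Rapaport theorem (applicable under strong irreducibility and the SOSC) gives $\hd F_\I=\min\{d_{\mathrm{aff}}(\I),2\}$, where $d_{\mathrm{aff}}$ denotes the affinity dimension, i.e.\ the root of Falconer's subadditive pressure. For infinite $\I$ I would extend this via the identity $\hd F_\I=\sup\{\hd F_\J:\J\subset\I\text{ finite}\}$ together with the corresponding countable-stability identity for $d_{\mathrm{aff}}$. A crucial consequence of the SOSC for an infinite IFS is that the operator norms $\|A_i\|$ must decay: the images $S_i(U)$ are pairwise disjoint inside the witnessing open set $U$, forcing their volumes to be summable. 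This makes available matrices of arbitrarily small pressure contribution, and is the mechanism by which the affinity dimension can be perturbed by arbitrarily small positive amounts.

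For compactness, boundedness is immediate from $D(\F)\subset[0,2]$. For closedness, let $s_n=\hd F_{\I_n}\to s$. If $s=2$, then $\I=\bigcup_n\I_n$ satisfies $d_{\mathrm{aff}}(\I)\geq\sup_n s_n=2$, and hence $\hd F_\I=2$. If $s<2$, I would construct nested finite subsystems $\J_1\subset\J_2\subset\cdots$ with $\hd F_{\J_k}\nearrow s$ by a greedy procedure: at stage $k$, adjoin to $\J_k$ a single matrix $A_{n_k}$ whose pressure contribution at level $d_{\mathrm{aff}}(\J_k)$ is small enough that the new affinity dimension remains below $s$ while rising above $s-1/k$. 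Setting $\I=\bigcup_k\J_k$ and applying the extended formula yields $\hd F_\I=s$.

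For perfectness, every $s=\hd F_\I$ must be a limit of other dimensions. If $s<2$ and $\I$ is infinite, strict monotonicity of $d_{\mathrm{aff}}$ under addition of a matrix produces a strictly increasing sequence $\hd F_{\I\cap\{1,\dots,N\}}<s$ accumulating at $s$. If $s<2$ and $\I$ is finite, then $\N\setminus\I$ is infinite and contains maps of arbitrarily small norm, and adjoining one such map yields a distinct dimension arbitrarily close to $s$. Finally, if $s=2$, I would apply the greedy construction from the compactness step (inside $\I$ when $\I$ is infinite, inside $\N$ otherwise) to produce subsystems $\J$ with $\hd F_\J$ strictly less than $2$ and arbitrarily close to $2$.

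The principal obstacle is the quantitative control at the heart of the greedy step: one needs two-sided estimates on $d_{\mathrm{aff}}(\J\cup\{n\})-d_{\mathrm{aff}}(\J)$ in terms of the singular values of $A_n$ and the state of the pressure of $\J$. In the conformal setting this follows immediately from multiplicativity of the derivative cocycle through an implicit-function argument on Moran's equation; but for self-affine systems the singular-value function is only submultiplicative. Establishing matching upper and lower bounds for the increments of Falconer's subadditive pressure---together with verifying strict monotonicity of $d_{\mathrm{aff}}$ under addition of a matrix, which rules out any degenerate isolated points---is the technical heart of the argument.
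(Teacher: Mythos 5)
Your overall reduction is the paper's: by B\'{a}r\'{a}ny--Hochman--Rapaport for finite subsystems and K\"{a}enm\"{a}ki--Reeve for infinite ones (the latter available because irreducibility gives quasimultiplicativity of $\phi^s$ via Feng's theorem), $D(\F)$ coincides with the affinity spectrum $\{s(\I):\I\subset\N\}$, and the technical heart is indeed two-sided control of the pressure increments $P_{\I\cup\J}(s)-P_\I(s)$. Your perfectness argument also matches the paper's: truncation plus strict monotonicity of the affinity dimension for infinite $\I$, and adjoining one far-out map for finite $\I$.

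However, your closedness argument has a genuine gap. The greedy construction never uses the approximating sets $\I_n$: you propose to realise an arbitrary target $s<2$ by adjoining, at each stage, a matrix whose pressure contribution lands the affinity dimension in the window $(s-1/k,\,s)$. There is no reason such a matrix exists --- the available increments form a fixed countable set determined by $\A$, not a tunable parameter --- and if the procedure worked for every such $s$ it would show that $D(\F)$ is the full interval $[0,\sup D(\F)]$, which is false in general: dimension spectra can have gaps even in the conformal case when the contraction ratios decay quickly, and this paper itself constructs affinity spectra with holes. The realising set must be extracted from the sequence $\I_n$ itself. The paper does this by a pigeonhole/diagonal procedure: let $k_1$ be the least digit lying in infinitely many $\I_n$, pass to that subfamily, let $k_2$ be the next such digit, and so on; setting $\I=\{k_i\}$ one gets $s(\I)\le s$ because every finite truncation of $\I$ is contained in infinitely many $\I_n$, and $s(\I)\ge s$ by a contradiction argument driven by the upper bound $P_{\I\cup\J}(s)\le P_\I(s)+C_\I\sum_{i\in\J}\phi^s(A_i)$ and the minimality of the $k_i$. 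Two smaller points: the SOSC only forces $\sum_i\lvert\det A_i\rvert<\infty$, not $\lVert A_i\rVert\to 0$ (the larger singular value can remain bounded below), so the decay your argument needs is really smallness of the tails $\sum_{i\ge N}\phi^s(A_i)$ at the relevant $s$; and you do not address the regime $P_\N(s)=\infty$ (at or below the finiteness parameter $\theta$), which the paper must and does treat separately by showing $[0,\theta]\subset D(\A)$.
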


In contrast, when a self-affine IFS $\mathcal{F}$ is outside of the class that is covered by Theorem \ref{cp}, its dimension spectrum $D(\mathcal{F})$ can have very different topological properties. In particular we will explicitly construct (a) a self-affine IFS whose dimension spectrum is not compact and (b) a self-affine IFS whose dimension spectrum contains isolated points. Recall that by Theorem \ref{clu-main}, neither (a) nor (b) can occur in the conformal setting. 

\begin{thm} \label{main}
There exist infinite self-affine IFS $\F$ satisfying the SOSC such that $D(\F)$ is not compact.
\end{thm}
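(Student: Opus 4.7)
Since $D(\F) \subseteq [0,2]$ is bounded, non-compactness is equivalent to non-closedness, so the plan is to exhibit a self-affine IFS $\F$ together with a sequence of subsystems $\mathcal{I}_k \subseteq \N$ whose Hausdorff dimensions $\hd F_{\mathcal{I}_k}$ converge to some value $s^*$ not realised as $\hd F_{\mathcal{I}}$ for any subsystem $\mathcal{I} \subseteq \N$. The tool I would exploit is the discontinuity of self-affine dimension as one passes between the \emph{strongly irreducible} regime (Hausdorff dimension coincides with affinity dimension \cite{bhr,bm}) and the \emph{reducible} regime (Hausdorff dimension is given by a variational principle \cite{baranski,lg} and is typically strictly smaller than the affinity dimension of the same system). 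The idea is to engineer $\F$ so that a sequence of strongly irreducible subsystems realises dimensions approaching a threshold $s^*$ whose only ``would-be'' realiser inside $\F$ is a reducible subsystem whose actual Hausdorff dimension is strictly smaller than $s^*$.

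\textbf{The construction.} I would take $\F = \{S_i\}_{i \in \N}$ with $S_i(x) = A_i x + t_i$ on $[0,1]^2$ composed of two pieces. The first, finite piece consists of $N$ maps whose linear parts all equal a common strongly contracting diagonal matrix $\Lambda = \textup{diag}(\alpha,\beta)$ with $0 < \alpha < \beta < 1$, with translations arranged so that SOSC holds inside $[0,1]^2$; the resulting Bedford--McMullen / Lalley--Gatzouras carpet has Hausdorff dimension $s_0$ which, by choice of the carpet pattern, can be made strictly smaller than the affinity dimension of the same block. The second, infinite piece consists of maps with linear parts $A_j = R_{\theta_j} \Lambda_j R_{-\theta_j}$ for $j \in \N$, where $\theta_j \neq 0$ and $\Lambda_j$ is a small diagonal perturbation of $\Lambda$; the translations and the pairs $(\theta_j, \Lambda_j)$ are chosen so that SOSC is preserved by the whole $\F$ and so that every subsystem of $\F$ containing at least one of the perturbation maps is strongly irreducible. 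Note that because the carpet piece is reducible, $\F$ violates the hypothesis of Theorem \ref{cp}.

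\textbf{Locating the limit point outside $D(\F)$.} Applying \cite{bhr,bm} under SOSC and strong irreducibility, $\hd F_\mathcal{I}$ equals the affinity dimension of $\{A_i\}_{i \in \mathcal{I}}$ for every $\mathcal{I}$ containing at least one perturbation map. Since affinity dimension is a continuous function of the singular values, tuning the singular values of $\Lambda_j$ to converge strictly from below to those of $\Lambda$ causes the dimensions of the subsystems $\mathcal{I}_k = \{1,\ldots,N\} \cup \{N+k\}$ to converge strictly increasingly to the threshold $s^*$, which is the affinity dimension of the $(N+1)$-element collection obtained by adjoining one more copy of $\Lambda$ to the carpet block. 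This ``would-be'' collection is reducible, so its actual Hausdorff dimension, obtained from the Bedford--McMullen / Lalley--Gatzouras variational principle \cite{baranski,lg}, is strictly smaller than $s^*$. Thus $s^*$ is a limit point of $D(\F)$. It remains to verify $s^* \notin D(\F)$: reducible subsystems of $\F$ are subsets of the carpet block and have dimension at most $s_0 < s^*$; strongly irreducible subsystems can be kept away from $s^*$ by choosing the perturbations generically, so that no finite combination of $A_j$'s (with or without carpet maps) has affinity dimension exactly $s^*$.

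\textbf{Main obstacle.} The technical heart of the argument is the global verification that $s^*$ is truly missed by every subsystem. On the strongly irreducible side, applying \cite{bhr,bm} uniformly is delicate: as $\theta_j \to 0$ and $\Lambda_j \to \Lambda$ the quantitative irreducibility degenerates, and one must tune the parameters so that the dimension-equals-affinity-dimension hypothesis remains valid for every relevant subsystem while simultaneously ensuring strict inequality with $s^*$. On the reducible side one must pin down the variational principle for every carpet sub-collection and verify all such dimensions lie strictly below $s^*$. Balancing these uniform estimates with the preservation of SOSC throughout the infinite family, and ruling out that any cross-combination of carpet and perturbation maps accidentally realises $s^*$, is the main challenge.
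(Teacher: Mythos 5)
Your high-level mechanism is the same as the paper's: a reducible finite block whose affinity dimension is a limit of Hausdorff dimensions of strongly irreducible subsystems but is not itself attained, because reducibility allows the block's actual Hausdorff dimension to fall strictly below its affinity dimension. However, your specific construction has a fatal flaw before one even reaches the delicate part. You take infinitely many maps whose linear parts $R_{\theta_j}\Lambda_j R_{-\theta_j}$ converge to the fixed nondegenerate matrix $\Lambda$. The open set condition forces $\sum_{i}|\det A_i|\le 1$ (the disjoint images $S_i(U)$ have total Lebesgue measure $\sum_i|\det A_i|\,|U|\le |U|$), whereas in your family $|\det A_j|\to|\det\Lambda|>0$, so the sum diverges. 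Hence no choice of translations can make $\F$ satisfy the SOSC. Repairing this forces the determinants (hence the norms) of the tail maps to decay, which in turn forces $s(\{1,\dots,N\}\cup\{N+k\})$ to converge to the affinity dimension of the carpet block itself from \emph{above} rather than to your $s^*$ from below — i.e., it pushes you back to the paper's construction, where the target value is the affinity dimension of the reducible block and the dimension drop via translations becomes genuinely necessary (in your version it was actually superfluous, since every reducible subsystem already has affinity dimension at most $s(\{1,\dots,N\})<s^*$).

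The second, independent gap is that "choosing the perturbations generically, so that no finite combination of $A_j$'s has affinity dimension exactly $s^*$" is not an argument, and it is precisely the crux of the theorem. There are uncountably many subsystems; those containing two or more perturbed maps have affinity dimensions accumulating at $s^*$ from \emph{both} sides in your setup (e.g.\ $\{1,\dots,N,N{+}k,N{+}k'\}$ exceeds $s^*$ for large $k,k'$ while $\{1,\dots,N,N{+}k\}$ lies below it), so ruling out exact attainment simultaneously for all of them requires a structural mechanism, not genericity. The paper supplies exactly this: a quantitative gap $(s(\I'),s(\I))$ below the target value, obtained from two-sided pressure estimates for positive matrices (Lemma \ref{positive change}), an entrywise monotonicity comparison between tail digits (Proposition \ref{compare}), and a case analysis showing $s(\I)$ is attained by no subsystem other than $\I$ itself (Lemma \ref{unique hole}); the target is then approached only from above via $s(\I\cup\{n\})\downarrow s(\I)$, where the increments $C_\I\phi^s(A_n)\to 0$ because the tail maps shrink. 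Without an analogue of these estimates your proof does not close. A minor further point: for infinite strongly irreducible subsystems, equality of Hausdorff and affinity dimension needs Theorem \ref{kr} in addition to \cite{bhr}, which is stated for finite systems.
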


\begin{thm} \label{isol}
There exist infinite self-affine IFS $\F$ satisfying the SOSC such that $D(\F)$ contains isolated points.
\end{thm}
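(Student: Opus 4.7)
My plan is to exploit the dimension drop phenomenon in Bedford--McMullen carpets in tandem with the dimension jump at strong irreducibility: the Hausdorff dimension of an exceptional diagonal planar self-affine set can be strictly less than its affinity dimension, while a generic strongly irreducible planar self-affine IFS attains its affinity dimension. I will construct $\F$ as a finite reducible ``exceptional'' core $J_0$ together with a countable family of non-diagonal generic perturbations $\{T_j\}_{j\in\N}$; the carpet dimension $s_0:=\hd F_{J_0}$ will sit inside a spectral gap created by this jump and hence be isolated in $D(\F)$.

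For the core take $J_0 = \{S_{(0,0)}, S_{(1,0)}, S_{(1,1)}\}$ with $S_{(i,j)}(x,y) = (x/3 + i/3, y/4 + j/4)$, a three-cell subsystem of the $3\times 4$ Bedford--McMullen grid with common linear part $A := \textup{diag}(1/3, 1/4)$ and non-uniform column counts $(1,2)$. By McMullen's formula $s_0 = \log_3(1 + 2^{\log 3/\log 4})$, which is strictly less than $\dim_{\textup{aff}}(J_0) = 1$; moreover every proper $J'\subsetneq J_0$ has $\dim_{\textup{aff}}(J') \leq \log_3 2$, yielding a quantitative gap $\delta := s_0 - \log_3 2 > 0$. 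Choose perturbations $T_j(x,y) = r_j R_{\theta_j}(x,y) + t_j$ with rotations $R_{\theta_j}$ of generically chosen angles $\theta_j$, contractions $r_j = c^{-j}$ for $c$ large enough that $\sum_j r_j^{s_0-\delta/2}$ is arbitrarily small, and translations $t_j$ placed inside the nine unused cells of the $3\times 4$ grid with enough separation that $\F$ satisfies the SOSC.

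The subsystems $\I\subset\N$ then partition into four cases: \textbf{(a)} $\I=J_0$, dimension exactly $s_0$; \textbf{(b)} $\I\subsetneq J_0$, a reducible subsystem of $J_0$ with (carpet) dimension at most $\log_3 2 = s_0-\delta$; \textbf{(c)} $\I\cap J_0=\emptyset$, a conformal IFS whose dimension solves $\sum_{j\in\I}r_j^s=1$ and so is at most $\log 2/\log c < s_0-\delta$ for $c$ large; \textbf{(d)} $\I$ meets both $J_0$ and $\{T_j\}$. In case (d) the matrix tuple is strongly irreducible (by the generic choice of $\theta_j$), proximal (since $A$ has distinct singular values) and satisfies the SOSC, so the B\'{a}r\'{a}ny--Hochman--Rapaport self-affine dimension theorem, applied to each finite truncation $\I_n\uparrow\I$ and combined with $\hd F_\I = \sup_n\hd F_{\I_n}$ and continuity of $\dim_{\textup{aff}}$ under increasing unions, yields $\hd F_\I = \dim_{\textup{aff}}(\I)$. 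The fast decay of $r_j$ makes $\dim_{\textup{aff}}(\I)$ arbitrarily close to $\dim_{\textup{aff}}(\I\cap J_0)$, which is $1$ when $\I\supseteq J_0$ (forcing $\hd F_\I \geq s_0+\delta'$ for some $\delta'>0$) and at most $\log_3 2$ otherwise (forcing $\hd F_\I \leq s_0-\delta/2$). Combining (a)--(d), a fixed punctured neighbourhood of $s_0$ is disjoint from $D(\F)$, so $s_0$ is isolated.

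The main obstacle is applying $\hd F_\I = \dim_{\textup{aff}}(\I)$ to the possibly countably infinite systems of case (d): since B\'{a}r\'{a}ny--Hochman--Rapaport is a finite-IFS theorem, one must approximate $\I$ by its finite truncations, verify strong irreducibility, proximality and the SOSC uniformly along the approximation, and pass to the limit using monotonicity of Hausdorff dimension and continuity of affinity dimension in the matrix data. The strong irreducibility and proximality transfer easily from any finite truncation containing $A$ together with at least one $T_j$ given generic $\theta_j$, and the SOSC is inherited by subsystems, so these checks are routine once the generic choices of $\theta_j$ and the separation of the $t_j$ are fixed; the quantitative gap isolating $s_0$ is then supplied by the explicit McMullen and affinity dimension computations for the finite core.
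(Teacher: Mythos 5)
Your construction is essentially correct, but it takes a genuinely different --- and considerably heavier --- route than the paper. The paper's example is engineered so that \emph{no} subsystem ever requires the generic self-affine theory: the two core maps share the linear part $\mathrm{diag}(1/3,1/4)$ but are translated only vertically, so $F_{\{1,2\}}$ lies on the $y$-axis and is literally a one-dimensional self-similar set of dimension $\log 2/\log 4$; every subsystem of the tail $\{3,4,\dots\}$ likewise degenerates to a self-similar subset of the line $x=1$ of dimension at most some $s_0<\log 2/\log 4$; and any mixed subsystem projects onto the full middle-third Cantor set in the $x$-direction, giving dimension at least $\log 2/\log 3>\log 2/\log 4$. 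Isolation of $\log 2/\log 4$ thus follows from similarity dimensions and a single projection inequality, with no affinity dimension, irreducibility, or pressure estimates. Your version instead isolates the McMullen value $s_0=\log_3(1+2^{\log 3/\log 4})$ (your formula and orientation are correct) by exploiting the exceptional/generic dichotomy: the gap above $s_0$ comes from the jump $\hd F_\I=s(\I)\ge s(J_0)=1$ once a generic rotation is adjoined to the full core, and the gap below from $s(J')\le\log_3 2$ for proper subcores plus a perturbation bound on the affinity dimension. This is precisely the mechanism the paper uses in \S 5.1 to prove \emph{non-compactness}, and the paper explicitly remarks that it could be pushed to produce an isolated point ``with some more work'' but opts for the simpler construction instead --- your proposal is essentially that harder route carried out. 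What it costs you is reliance on McMullen's formula, B\'ar\'any--Hochman--Rapaport for each finite mixed truncation, the K\"aenm\"aki--Reeve limit theorem for infinite mixed subsystems, and a quantitative version of ``$s(\I)$ is close to $s(\I\cap J_0)$''; this last claim is the one step you assert rather than prove, and it does need an argument because $\phi^s$ is only submultiplicative --- it is exactly Lemma 3.3 of the paper, and in your setting it holds cheaply since the core consists of powers of a single diagonal matrix, on which $\phi^s$ is genuinely multiplicative, whence $\sum_{\i\in\I^n}\phi^s(A_\i)\le\bigl(|\I\cap J_0|\,3^{-s}+\sum_j r_j^s\bigr)^n$. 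Two small simplifications are available: for $\I\supseteq J_0$ you can bypass the infinite-system machinery entirely, since the single finite subsystem $J_0\cup\{T_j\}$ already has Hausdorff dimension at least $1>s_0$ by B\'ar\'any--Hochman--Rapaport and monotonicity does the rest; and the upper bound $\hd F_\I\le s(\I)$ needs no irreducibility, so the limit theorem is only ever needed where it is actually available.
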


The conformal analogue of self-affine IFS are self-similar IFS, where each map $S_i$  is a similarity contraction. In the self-similar setting, the structure of the dimension spectrum solely depends on the sequence of contraction ratios of the maps in the self-similar IFS. What Theorems \ref{cp}, \ref{main} and \ref{isol} demonstrate is that in the self-affine case, the structure of the dimension spectrum is not only related to the sequences of singular values of the linear parts of the maps in the self-affine IFS (which describe the contraction ratios in different directions), but also on the irreducibility properties of the set of linear parts. 

The paper will be organised as follows. In \S 2 we provide some background on self-affine sets and tools for studying their dimension theory. In \S 3 we obtain some estimates on how appropriate `pressure functions'  change when maps from the IFS are added or removed. In \S 4 we use these estimates to prove Theorem \ref{cp}. In \S 5 we prove Theorems \ref{main} and \ref{isol}.

\section{Preliminaries}

\subsection{Finitely and infinitely generated self-affine sets} \label{sa prel}

Given a finite or countable IFS $\{S_i\}_{i \in \I}$ on $\R^d$ with an attractor $F_\I$, we say that the IFS satisfies the \emph{open set condition} (OSC) if there exists an open set $U \subset \R^d$ such that $\bigcup_{i \in \I} S_i U \subset U$ where the union is disjoint. We say that the IFS satisfies the \emph{strong open set condition} if additionally, $F_\I \cap U \neq \emptyset$. 

Let $\{S_i: [0,1]^d \to [0,1]^d \}_{ i \in \I}$ be a finite or countable set of affine contractions; that is, $S_i(\cdot)= A_i (\cdot) +t_i$ with $A_i \in \mathcal{G}\mathcal{L}_d(\R)$ such that $\sup_{i \in \I} \norm{A_i}<1$ where $\norm{\cdot}$ denotes the Euclidean norm, and $t_i \in \R^d$. 

Let $\I^n=\{i_1 \ldots i_n: i_j \in \I\}$ denote words of length $n$ over the index set $\I$, $\I^*=\bigcup_{n \in \N} \I^n$ denote all finite words over the index set and $\Sigma_\I=\I^{\N}$ denote all sequences over the index set. It will also be convenient for us to introduce the notation $\emptyset$ for the `empty word': for $\i \in \I^*$ we let $\emptyset \i$ denote the word $\i$. Given $\i \in \I^*$, let $[\i]$ denote the cylinder set $[\i]=\{\i\j: \j \in \Sigma\}$. Given $\i=i_1 \ldots i_n \in \I^*$ let $|\i|$ denote the length of the word $|\i|=n$. Given $\i=i_1i_2 \ldots \in \Sigma$ or $\i=i_1 \ldots i_{n+m} \in \I^*$ let $\i|_n:= i_1 \ldots i_n$. Given $\i =i_1 \ldots i_n\in \I^*$ let $S_\i:=S_{i_1} \circ \cdots \circ S_{i_n}$ and $A_{\i}:=A_{i_1} \cdots A_{i_n}$. 

Define $\Pi: \Sigma_\I \to \R^d$ as
\begin{eqnarray*}
\Pi(\i) &=& \lim_{n \to \infty} S_{i_1} \circ \cdots \circ S_{i_n}(0) \\ &=& \sum_{k=1}^{\infty} A_{\i|_{k-1}}t_{i_k}.
\end{eqnarray*}
Define $F_\I:=\Pi(\Sigma_\I)$. It is easy to see that $F_\I= \bigcup_{i \in \I} S_i F_\I$, and we call $F_\I$ the attractor of $\{S_i\}_{i \in \I}$. Note that if $\I$ is finite, then $F_\I$ is the unique, non-empty, compact set such that $F_\I= \bigcup_{\i \in \I} S_i F_\I$ by the classical work of Hutchinson \cite{hutchinson}.

\subsection{Singular value function and its multiplicativity properties}

For $A \in \mathcal{G}\mathcal{L}_d(\R)$, let $0<\alpha_d(A) \leq \cdots \leq \alpha_1(A)<1$ denote the $d$ \emph{singular values} of $A$. Define the \emph{singular value function}
\begin{equation*}
\phi^s(A) :=\begin{cases} \alpha_1(A) \cdots \alpha_{\lfloor s \rfloor}(A) \alpha_{\lceil s \rceil}(A)^{s-\lfloor s \rfloor} & s \in [0,d]  \\
|\det A|^{\frac{s}{2}} & s >d \\ \end{cases}. \label{svf}
\end{equation*}
In particular, notice that in the planar setting since $\alpha_1(A)=\norm{A}$ and $\alpha_1(A)\alpha_2(A)=|\det A|$ we have
\begin{equation}
\phi^s(A) =\begin{cases} \norm{A}^{s} & s \in [0,1)  \\
\norm{A}^{2-s} \, |\det(A)|^{s-1} & s \in [1, 2]\\
|\det A|^{\frac{s}{2}} & s >2 \\ \end{cases}. \label{svf2}
\end{equation} 

Note that for any $s \geq 0$, $\phi^s$ is submultiplicative: $\phi^s(AB) \leq \phi^s(A)\phi^s(B)$ for any $A,B \in \mathcal{G}\mathcal{L}_d(\R)$. However, in general $\phi^s$ is not supermultiplicative, that is, it is not generally true that $\phi^s(AB) \geq \phi^s(A)\phi^s(B)$ for $A, B \in \mathcal{GL}_d(\R)$.

We say that $\phi^s$ is \emph{quasimultiplicative} on $\I$ if there exists a constant $c_\I>1$ and a finite subset $\Gamma \subset \I^*$ such that for any $\i, \j \in \I^*$, there exists $\k \in \Gamma$ such that
 $$\frac{1}{c_\I}\phi^s(A_\i)\phi^s(A_\j) \leq  \phi^s(A_{\i\k\j}).$$
We say that the norm is \emph{quasimultiplicative} on $\I$ if $\phi^1(\cdot)=\norm{\cdot}$ is quasimultiplicative on $\I$. In \cite[Proposition 2.8]{feng2}, Feng showed that if $\A=\{A_i\}_{i \in \I}$ is irreducible, then the norm is quasimultiplicative on $\I$ (the theorem was stated for finite sets of matrices but the proof applies verbatim in the infinite case). Note that this implies that $\phi^s$ is quasimultiplicative on $\I$ for any $s \geq 0$, provided $\A \subset \mathcal{GL}_2(\R)$ is irreducible.

\begin{prop}[Proposition 2.8 \cite{feng2}] \label{feng quasi}
If $\{A_i\}_{i \in \I} \subset \mathcal{G}\mathcal{L}_2(\R)$ is irreducible then for all $s \geq 0$, $\phi^s$ is quasimultiplicative on $\I$. 
\end{prop}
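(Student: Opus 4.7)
The plan is to deduce the quasimultiplicativity of $\phi^s$ for every $s \geq 0$ from the quasimultiplicativity of the operator norm, which Feng's theorem has already established for any irreducible $\A \subset \mathcal{GL}_d(\R)$. The key observation is that in dimension two the piecewise formula (\ref{svf2}) expresses $\phi^s(A)$ as a monomial in the two scalar quantities $\norm{A}$ and $|\det A|$, and although the norm is only quasimultiplicative, the determinant is \emph{exactly} multiplicative. Combining these two facts should yield a quasimultiplicative lower bound for $\phi^s$ in each of the three regimes appearing in (\ref{svf2}).

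First I would invoke Feng's norm quasimultiplicativity to produce a constant $c > 1$ and a finite subset $\Gamma \subset \I^*$ such that for every pair $\i, \j \in \I^*$ there exists $\k \in \Gamma$ with $\norm{A_{\i\k\j}} \geq c^{-1}\norm{A_\i}\norm{A_\j}$. This same $\Gamma$ will witness quasimultiplicativity for every $s$; only the constant $c_\I$ will depend on $s$. I would also set $\delta := \min_{\k \in \Gamma} |\det A_\k|$, which is strictly positive because $\Gamma$ is finite and every $A_\k$ is invertible.

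Next I would split into the three regimes of (\ref{svf2}). For $s \in [0,1)$, we have $\phi^s(\cdot) = \norm{\cdot}^s$, and raising the norm bound to the $s$-th power immediately gives $\phi^s(A_{\i\k\j}) \geq c^{-s}\phi^s(A_\i)\phi^s(A_\j)$. For $s > 2$, we have $\phi^s(\cdot) = |\det(\cdot)|^{s/2}$, and the strict multiplicativity of $\det$ yields $\phi^s(A_{\i\k\j}) = |\det A_\k|^{s/2}\phi^s(A_\i)\phi^s(A_\j) \geq \delta^{s/2}\phi^s(A_\i)\phi^s(A_\j)$ for \emph{any} $\k \in \Gamma$. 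For $s \in [1,2]$, I would combine both ingredients: since $2-s \geq 0$, the norm bound raised to this power gives $\norm{A_{\i\k\j}}^{2-s} \geq c^{s-2}\norm{A_\i}^{2-s}\norm{A_\j}^{2-s}$, while the multiplicativity of $\det$ together with $|\det A_\k|^{s-1} \geq \delta^{s-1}$ gives $|\det A_{\i\k\j}|^{s-1} \geq \delta^{s-1}|\det A_\i|^{s-1}|\det A_\j|^{s-1}$; multiplying these two inequalities produces $\phi^s(A_{\i\k\j}) \geq c^{s-2}\delta^{s-1}\phi^s(A_\i)\phi^s(A_\j)$.

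In dimension two there is no substantial obstacle: the proposition reduces to an algebraic consequence of Feng's norm theorem combined with the multiplicativity of $\det$. The only point needing a little care is that the exponents $s$, $s-1$, $2-s$, $s/2$ all behave well with the positive constants $c$ and $\delta$, which is automatic since $\Gamma$ is finite and each $A_\k \in \mathcal{GL}_2(\R)$. The reason the analogous statement becomes more delicate in higher dimensions is that for intermediate exponents $s \in (1,d-1)$ the singular value function involves products of middle singular values $\alpha_k(A)$ that cannot be controlled by $\norm{A}$ and $|\det A|$ alone; one would instead need quasimultiplicativity of the norms of all exterior powers $\Lambda^k A$, i.e.\ irreducibility of the corresponding exterior-power representations.
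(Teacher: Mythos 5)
Your proposal is correct and is essentially the paper's own argument: the paper cites Feng's result that irreducibility gives quasimultiplicativity of the norm, and then deduces quasimultiplicativity of $\phi^s$ for all $s\geq 0$ in the planar case exactly because (\ref{svf2}) writes $\phi^s$ as a monomial in $\norm{\cdot}$ (quasimultiplicative) and $|\det(\cdot)|$ (exactly multiplicative), with the same connecting set $\Gamma$ working for every $s$. Your closing remark about why intermediate exponents obstruct the argument in higher dimensions also matches the paper's discussion following the proposition.
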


Note that \cite[Proposition 2.8]{feng2} ensures quasimultiplicativity for all values of $s$ only in the planar case because in this setting the singular value function can be written as in (\ref{svf2}). In higher dimensions the quasimultiplicativity of the norm implies quasimultiplicativity of $\phi^s$ for $s \in [0,1]\cup[d-1,d]$. On the other hand, in \cite[Lemma 3.5]{morris_kaenmaki} a generalised irreducibility condition was introduced which guarantees quasimultiplicativity of $\phi^s$ for other values of $s$ when $\A \subset \mathcal{GL}_d(\R)$ for arbitary dimension $d$.

A significantly stronger condition is almost multiplicativity. We say that $\phi^s$ is \emph{almost multiplicative} on $\I$ if there exists $c_\I>1$ such that for all $\i, \j \in \I^*$,
$$\frac{1}{c_\I} \phi^s(A_\i)\phi^s(A_\j) \leq \phi^s(A_\i A_\j) \leq \phi^s(A_\i) \phi^s(A_\j).$$

Let $A=\begin{pmatrix}a&b\\c&d \end{pmatrix}$ be a non-negative matrix. Let $\mathbf{1}=\begin{pmatrix}1\\1 \end{pmatrix}$. Note that $\norm{A}':= \mathbf{1}^T A \mathbf{1}=2(a+b+c+d)$ defines a norm on the set of all non-negative matrices. We let $(A)_{(i,j)}$ denote the entry in the $i$th row and $j$th column of $A$.

\begin{defn} \label{kappa}
Suppose $\{A_i\}_{i \in \I}\subset \mathcal{G}\mathcal{L}_2(\R)$ are a set of positive matrices. For any $\mathcal{J} \subset \I^*$ define
$$\kappa(\mathcal{J}):=\min\left\{\frac{(A_\i)_{(i,j)}}{(A_\i)_{(i',j)}} : \i \in \mathcal{J}\right\}.$$
\end{defn}

The constant $\kappa(\mathcal{J})$ describes the projective contraction of the family $\{A_i: i \in \mathcal{J}\}$. In particular, it is determined by $\bigcup_{\i \in \mathcal{J}} A_\i P$, where $P$ denotes the positive cone 
$$P:=\left\{ \begin{pmatrix} x\\y \end{pmatrix} \; : \; x,y >0\right\}.$$ Therefore if $\kappa(\I)>0$ then $\kappa(\I^*)=\kappa(\I)$, since $\bigcup_{\i \in \I^*} A_\i P= \bigcup_{\i \in \I} A_\i P$.

In the following lemma we see that if $\{A_i\}_{i \in \I}\subset \mathcal{G}\mathcal{L}_2(\R)$ are positive matrices with $\kappa(\I)>0$ then $\phi^s$ is almost multiplicative on $\I$. 

\begin{lma} \label{pos almost}
Suppose $\{A_i\}_{i \in \I}\subset \mathcal{G}\mathcal{L}_2(\R)$ is a set of positive matrices with $\kappa(\I)>0$. Denote
$$c := \frac{1}{2} \kappa(\I) \in (0, 1/2).$$
Then for any $i_1, \ldots , i_{n+m} \in \I$,
$$\norm{A_{i_1} \cdots A_{i_{n+m}}}' \geq c \norm{A_{i_1} \cdots A_{i_n}}' \norm{A_{i_{n+1}} \cdots A_{i_{n+m}}}'.$$
\end{lma}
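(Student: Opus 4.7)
The approach is to expand $\|BC\|'$ as a bilinear combination of column sums of $B$ and row sums of $C$, and then use the positivity quantified by $\kappa(\I)$ to show that both row sums of $C$ are a controlled fraction of $\|C\|'$.

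First, I would set $B := A_{i_1}\cdots A_{i_n}$ and $C := A_{i_{n+1}}\cdots A_{i_{n+m}}$, both of which are positive matrices and lie in the semigroup $\{A_\i : \i \in \I^*\}$. Writing $\mathbf{1}^T BC\mathbf{1} = \sum_{i,j,k}B_{ik}C_{kj}$ and rearranging, I get the identity
$$\|BC\|' \;=\; u_1 v_1 + u_2 v_2,$$
where $u_k := B_{1k}+B_{2k}$ is the $k$-th column sum of $B$ and $v_k := C_{k1}+C_{k2}$ is the $k$-th row sum of $C$. Correspondingly $\|B\|' = u_1+u_2$ and $\|C\|' = v_1+v_2$.

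Next I would apply the $\kappa$ condition to $C$. Since $\kappa(\I)>0$, the remark preceding Definition \ref{kappa} gives $\kappa(\I^*)=\kappa(\I)$, so for every column $j$ of $C$ one has $C_{1j}\geq \kappa(\I)\, C_{2j}$ and $C_{2j}\geq \kappa(\I)\, C_{1j}$. Summing over $j\in\{1,2\}$ yields $v_1\geq \kappa(\I) v_2$ and $v_2\geq \kappa(\I) v_1$. Observing that $\kappa(\I)\leq 1$ (since the reciprocal ratios $A_{ij}/A_{i'j}$ and $A_{i'j}/A_{ij}$ both appear in the min defining $\kappa$ and their product is $1$), this gives $v_k \geq \tfrac{\kappa(\I)}{2}(v_1+v_2) = c\,\|C\|'$ for $k=1,2$.

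Plugging this into the expansion of $\|BC\|'$ concludes the proof:
$$\|BC\|' = u_1 v_1 + u_2 v_2 \;\geq\; c(u_1+u_2)(v_1+v_2) \;=\; c\,\|B\|'\,\|C\|'.$$
The only conceptual subtlety — and hence what I regard as the ``hard part'' — is recognising the structural asymmetry: after expanding $\mathbf{1}^T BC\mathbf{1}$, the terms that appear are column sums of $B$ paired with row sums of $C$, and the definition of $\kappa$ controls ratios \emph{within columns}, which is exactly what is needed to compare row sums of $C$ but says nothing useful about column sums of $B$. Fortunately one only needs the comparability on the $C$ side to push the argument through.
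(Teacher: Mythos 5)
Your proof is correct and is essentially the paper's argument written out in coordinates: the paper inserts the rank-one factor $cE = c\,\mathbf{1}\mathbf{1}^T$ between the two products via the entrywise inequality $A_\i \geq cEA_\i$, and your bound $v_k \geq c\norm{C}'$ on the row sums of $C$ is exactly what that inequality gives after summing along rows. The bilinear expansion $\norm{BC}' = u_1v_1+u_2v_2$ then plays the same role as the identity $\mathbf{1}^T B\mathbf{1}\mathbf{1}^T C\mathbf{1} = \norm{B}'\norm{C}'$.
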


\begin{proof}
We recall the proof from \cite[Lemma 2.1]{feng} (which provides a $d$-dimensional version of this result) for completeness. First observe that since $\kappa(\I^*)=\kappa(\I)$, for any $\i \in \I^*$, $A_\i \geq c E A_\i$ where $E= \begin{pmatrix} 1&1\\1&1 \end{pmatrix}$. 
Therefore,
\begin{eqnarray*}
\norm{A_{i_1} \cdots A_{i_{n+m}}}' &\geq& \norm{A_{i_1} \cdots A_{i_n} c E A_{i_{n+1}} \cdots A_{i_{n+m}}}'\\
&=& c \norm{A_{i_1} \cdots A_{i_n} \mathbf{1} \mathbf{1}^T A_{i_{n+1}} \cdots A_{i_{n+m}}}' \\
&=& c \mathbf{1}^T A_{i_1} \cdots A_{i_n} \mathbf{1} \mathbf{1}^T A_{i_{n+1}} \cdots A_{i_{n+m}} \mathbf{1} \\
&=& c \norm{A_{i_1} \cdots A_{i_n}}' \norm{A_{i_{n+1}} \cdots A_{i_{n+m}}}'.
\end{eqnarray*}
\end{proof}

Recently in \cite{bmk}, B\'{a}r\'{a}ny, Morris and K\"{a}enm\"{a}ki studied more general properties of the semigroup generated by $\A$ which ensure almost multiplicativity of the norm.

\subsection{Pressure and dimension theory}

Define the pressure function $P_\I:[0, \infty) \to \R \cup \{\infty\}$ by
$$P_\I(s):= \lim_{n \to \infty} \left( \sum_{\i \in \I^n} \phi^s(A_\i)\right)^{\frac{1}{n}}.$$
The pressure $P_\I(s)$ is a strictly decreasing function of $s$ and is therefore finite on an interval $(\theta, \infty)$, where $\theta:= \inf \{s \geq 0: P_\I(s)< \infty\}$ is called the \emph{finiteness parameter} for the system $\{A_i\}_{i \in \I}$. In particular $\theta=0$ if $\I$ is finite, but can be strictly positive if $\I$ is infinite. $P_\I$ is a continuous and convex function of $s$ on $(\theta, \infty)$.

We define the \emph{affinity dimension} to be 
$$s(\I):= \inf\{s \geq 0: P_\I(s) \leq 1\}.$$
Note that since all matrix norms are equivalent, $P_\I(s)$ and $s(\I)$ are actually independent of the matrix norm used in the definition of $\phi^s$. 

In his seminal paper \cite{falconer}, Falconer introduced the objects defined above and showed that if $\I$ is finite then $\min\{s(\I),d\}$ is an upper bound on $\hd F_\I$, and is equal to $\hd F_\I$ for generic choices of translations. Since then the focus of research on dimension theory of finitely generated self-affine sets has been to determine the dimension of explicit families of self-affine sets, which has split into two independent strands of research: one in which one strives to characterise families of self-affine sets which obey the generic rule \cite{bhr,hr} and one in which one strives to obtain the dimension of `exceptional' self-affine sets for which the generic formula does not hold \cite{lg,baranski}. The following theorem of B\'{a}r\'{a}ny, Hochman and Rapaport \cite{bhr} falls into the first category, where it was shown  that in the planar setting, $\hd F_\I= s(\I)$ outside of a small family of exceptions.

\begin{thm}[Theorem 1.1 \cite{bhr}]
Suppose $\I$ is finite and $\{S_i: i \in \I\}$ is a strongly irreducible planar self-affine IFS which satisfies the SOSC. Then $\hd F_\I= s(\I)$. \label{bhr}
\end{thm}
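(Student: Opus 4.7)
The plan is to establish the matching lower bound $\hd F_\I \geq s(\I)$, since the reverse inequality $\hd F_\I \leq s(\I)$ is Falconer's classical upper bound coming from the subadditive thermodynamic formalism. The standard route is to produce a self-affine measure $\mu$ on $F_\I$ realising the affinity dimension, i.e.\ with $\hd \mu = s(\I)$. The natural candidate is $\mu = \Pi_\ast \nu$, where $\nu$ is the K\"{a}enm\"{a}ki (subadditive equilibrium) measure on $\Sigma_\I$ associated with $\phi^{s(\I)}$; by the variational characterisation of $s(\I)$, the entropy $h(\nu)$ and the Lyapunov exponents $\chi_1 \geq \chi_2$ of the cocycle $\{A_i\}_{i\in\I}$ under $\nu$ combine to give exactly $s(\I)$.

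The first technical step is to establish a Ledrappier--Young type formula for $\mu$. In the planar setting one decomposes $\hd \mu$ in terms of $h(\nu)$, the exponents $\chi_1,\chi_2$, and the dimension of the Furstenberg measure $\mu_F$ on $\mathbb{RP}^1$ associated with the contracting direction of the cocycle. Strong irreducibility, together with the Furstenberg--Guivarc'h theory of random matrix products, ensures that $\mu_F$ is well-defined, non-atomic, and has positive dimension; one also needs exact dimensionality of $\mu$ itself, which in this setting follows from the general Feng--Hu theorem for self-affine measures. Combined with the SOSC, which eliminates ambient overlaps in $\R^2$ and lets entropy at small scales be read off geometrically from the cylinder partition, this Ledrappier--Young decomposition reduces matters to computing $\hd \mu_F$ and the "transverse slice" dimensions.

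The crux, and the main obstacle, is to rule out \emph{dimension drops}: to show that $\mu_F$ and the conditional slices of $\mu$ along the contracting foliation actually attain the values predicted by entropy and Lyapunov exponents, rather than losing dimension due to near-coincidences among the compositions $A_\i$. The proposal is to use strong irreducibility to establish an exponential separation property for the projective action of $\{A_\i : \i \in \I^\ast\}$ on $\mathbb{RP}^1$, and then to feed this into Hochman's inverse theorem for entropy of convolutions on the line (applied in the projective chart) to preclude resonances. This is the analytically heaviest step: one has to control entropy between scales of the projective distribution of $A_\i$, simultaneously across a generic trajectory, and this is where strong irreducibility is genuinely used beyond merely defining $\mu_F$.

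Once these dimension conservation statements are in place, the Ledrappier--Young formula delivers the value of $\hd \mu$, and the identity between the K\"{a}enm\"{a}ki measure's entropy/exponent combination and $s(\I)$ — which follows by differentiating the subadditive pressure at the critical exponent — identifies $\hd \mu$ with $s(\I)$. The assembly of the three ingredients (Furstenberg measure analysis via strong irreducibility, Hochman-type inverse theorem adapted to the projective cocycle, and the Ledrappier--Young decomposition for planar self-affine measures) constitutes the substance of the proof, with the Hochman-type step being by far the most delicate.
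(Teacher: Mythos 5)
This statement is not proved in the paper at all: it is Theorem 1.1 of B\'{a}r\'{a}ny--Hochman--Rapaport \cite{bhr}, imported verbatim as an external input, so there is no internal argument to compare yours against. Judged on its own terms, your outline is a fair high-level description of the strategy actually used in \cite{bhr}: Falconer's upper bound, the K\"{a}enm\"{a}ki equilibrium measure for $\phi^{s(\I)}$ whose entropy and Lyapunov exponents recover $s(\I)$, the Ledrappier--Young formula for planar self-affine measures, and an application of Hochman's inverse theorem for entropy of convolutions to rule out dimension drop. In that sense you have correctly identified the right proof, not invented a different one.

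However, what you have written is a roadmap rather than a proof: each of your three ``ingredients'' is itself a substantial theorem occupying a significant part of a long paper, and none is carried out here. Two specific points are also imprecise. First, exact dimensionality and the Ledrappier--Young formula for planar self-affine measures are due to B\'{a}r\'{a}ny and K\"{a}enm\"{a}ki, not to Feng--Hu (whose theorem concerns self-conformal measures); this matters because the affine Ledrappier--Young formula is exactly the nontrivial structural input. Second, under the SOSC one does not need an ``exponential separation property'' for the IFS --- the separation hypothesis replaces it; the delicate step in \cite{bhr} is instead showing that for $\mu_F$-typical directions $\theta$ the projection $\pi_\theta\mu$ has dimension $\min\{1,\dim\mu\}$, i.e.\ the inverse theorem is applied to \emph{projections of the measure}, with strong irreducibility guaranteeing non-atomicity and positive dimension of the Furstenberg measure so that the entropy increment argument can be run along a typical trajectory. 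If you intend this as a proof rather than a citation, each of these steps must be supplied in full; if you intend it as a citation, the correct move is simply to quote \cite{bhr}, as the paper does.
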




The dimension theory of infinite self-affine IFS is less understood, however when the norm is quasimultiplicative one can recover many analogues of theorems concerning finite self-affine IFS. In \cite{kaenmaki_reeve}, K\"{a}enm\"{a}ki and Reeve proved the following.

\begin{thm}[\cite{kaenmaki_reeve}]
Suppose $\I \subset \N$ is infinite and $\{A_i\}_{i \in \I}\subset \mathcal{GL}_2(\R)$ are irreducible. Suppose there exists a sequence of finite sets $\I_1 \subset \I_2 \subset \ldots$ such that $\I= \bigcup_{n \in \N} \I_n$ where $\hd F_{\I_n}=s(\I_n)$ for all $ n \in \N$. Then
\begin{equation}
\hd F_\I =s(\I). \label{kr eq}
\end{equation} \label{kr}
\end{thm}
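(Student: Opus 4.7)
The plan is to prove the inequalities $\hd F_\I \leq s(\I)$ and $\hd F_\I \geq s(\I)$ separately. The hypothesis on the exhausting sequence $(\I_n)$ and the irreducibility of $\{A_i\}_{i\in\I}$ are needed only for the lower bound.

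For the upper bound I would use the standard cover of $F_\I$ by the ellipsoids $S_\i([0,1]^d)$, $\i \in \I^n$, whose semi-axes are comparable to the singular values of $A_\i$. Each such ellipsoid can be covered by $\lesssim \phi^s(A_\i)/\alpha_{\lceil s\rceil}(A_\i)^s$ balls of radius $\alpha_{\lceil s\rceil}(A_\i)$, contributing $\lesssim \phi^s(A_\i)$ to the $s$-dimensional Hausdorff mass. Summing yields $\mathcal{H}^s(F_\I)\lesssim \sum_{\i\in\I^n}\phi^s(A_\i)$, and for any $s>s(\I)$ we have $P_\I(s)<1$, so by definition of the pressure these sums decay exponentially in $n$. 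Hence $\hd F_\I \leq s$ for all $s>s(\I)$.

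For the lower bound, the inclusion $F_{\I_n}\subseteq F_\I$ together with the hypothesis gives $\hd F_\I \geq \hd F_{\I_n} = s(\I_n)$ for every $n$, so it suffices to prove the continuity statement $s(\I_n)\to s(\I)$. Monotonicity of the pressure in the index set forces $s(\I_n)\nearrow s^* \leq s(\I)$; I would suppose $s^*<s(\I)$ and derive a contradiction. Pick $s\in(s^*,s(\I))$, so that $P_\I(s)>1$. By Fekete's lemma $a_k(\I):=\sum_{\i\in\I^k}\phi^s(A_\i)\geq P_\I(s)^k\to\infty$, so one can fix a level $k$ and a finite $W\subset\I^k$ with $\sum_{\i\in W}\phi^s(A_\i)$ much larger than the quasimultiplicativity constant $c_\I$ provided by Proposition~\ref{feng quasi} (which applies because $\{A_i\}_{i\in\I}$ is irreducible). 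Enlarging $n$ so that $W$ and the finite bridging set $\Gamma\subset\I^*$ supplied by Proposition~\ref{feng quasi} both lie inside $\I_n^*$, I would then bridge $m$-tuples of words in $W$ using elements of $\Gamma$ and sum over the resulting bounded window of lengths to obtain, for some subsequence $\ell_m=O(m)$,
\[
a_{\ell_m}(\I_n)^{1/\ell_m} \;\geq\; \bigl(c\cdot\textstyle\sum_{\i\in W}\phi^s(A_\i)\bigr)^{1/(k+L)}(1+o(1)),
\]
where $L=\max_{\k\in\Gamma}|\k|$ and $c>0$ depends only on $c_\I$ and $L$. Choosing $k$ so that $c\sum_{\i\in W}\phi^s(A_\i)>1$ forces $P_{\I_n}(s)\geq 1$, hence $s(\I_n)\geq s > s^*$, contradicting $s(\I_n)\to s^*$.

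The main obstacle is the concatenation step above. Since $\phi^s$ is only submultiplicative, the quasimultiplicative bound from Proposition~\ref{feng quasi} is the sole source of any supermultiplicative-type control, and converting the pointwise inequality $\phi^s(A_{\i\k\j})\geq c_\I^{-1}\phi^s(A_\i)\phi^s(A_\j)$ into a lower bound on the pressure $P_{\I_n}(s)$ requires careful bookkeeping: one must track the multiplicity of the map $(\i_1,\ldots,\i_m)\mapsto\i_1\k_1\cdots\k_{m-1}\i_m$ (at most $L^{m-1}$ length patterns per concatenated word, which is absorbed harmlessly into the $(k+L)$-th root) and select $k$ large enough that the gain from $\sum_{\i\in W}\phi^s(A_\i)$ overcomes the constants $c_\I$ and $L$ after being distributed along this root. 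Modulo this combinatorics, the rest of the argument is essentially automatic, and crucially it uses the monotone exhaustion $\I_n\nearrow\I$ only through the fact that the finite sets $W$ and $\Gamma$ are eventually captured by some $\I_n$.
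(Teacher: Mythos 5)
Your argument is correct and follows essentially the same route as the paper: the paper's proof simply assembles citations to K\"{a}enm\"{a}ki--Reeve (their Proposition 3.2 for the pressure approximation $P_{\I_n}(s)\to P_\I(s)$ under quasimultiplicativity, and the argument of their Theorem B for the covering upper bound and the resulting identity $\hd F_\I=s(\I)$), whereas you supply those details directly -- the Falconer-type ellipsoid cover for $\hd F_\I\le s(\I)$, and the bridging argument via Proposition~\ref{feng quasi} for $s(\I_n)\nearrow s(\I)$. The combinatorial bookkeeping you flag (multiplicity at most $L^{m-1}$, lengths spread over a window of size $(m-1)L+1$, both absorbed in the $m$-th root) closes correctly once you note that $P_{\I_n}(s)=\lim_\ell a_\ell(\I_n)^{1/\ell}$ exists by Fekete, so the bound along your subsequence $\ell_m$ suffices.
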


\begin{proof} The result can be gleaned from the work in \cite{kaenmaki_reeve}, but since it is not stated in this exact form, we detail the relevant results from their paper. Observe that by Proposition \ref{feng quasi}, if $n$ taken sufficiently large that $\Gamma \subset \I_n$, then $\phi^s$ is quasi-multiplicative on $\I_n$, and the constant from the definition of quasi-multiplicativity can be taken uniformly. Therefore by \cite[Proposition 3.2]{kaenmaki_reeve}, $P_\I(s)= \lim_{n \to \infty} P_{\I_n}(s)$. In particular, as noted in the proof of \cite[ Theorem B]{kaenmaki_reeve} (on the bottom of page 12), 
$$s(\I)=\lim_{n \to \infty} s(\I_n)= \lim_{n \to \infty} \hd F_{\I_n},$$
where the last equality follows by our assumption. We also see from the argument on the top of page 13 in \cite[ Theorem B]{kaenmaki_reeve} that since $s(\I_n)= \hd F_{\I_n}$ for all $n \in \N$, then $\hd F_\I=s(\I)$.
\end{proof}

Note that if $\{A_i\}_{i \in \I}\subset \mathcal{GL}_2(\R)$ is strongly irreducible then directly from Theorem \ref{bhr} and Theorem \ref{kr} one obtains $\hd F_\I=s(\I)$.

\section{Pressure estimates}

In order to analyse the dimension spectrum, it is necessary to obtain lower and upper bounds on $|P_{\I \cup \J}(s)-P_\I(s)|$ when $\J$ is either a digit or a set of digits from $\N \setminus \I$. When the IFS $\{S_i\}_{i \in \I}$ is conformal, the dimension is given by the root of the pressure function $\lim_{n\to \infty} \left(\sum_{\i \in \I^n} \norm{S'_\i}_{\infty}^s\right)^{\frac{1}{n}}$, and the multiplicativity of the derivative (i.e. the chain rule for $S_\i'$) makes estimating $|P_{\I \cup \J}(s)-P_\I(s)|$ relatively straightforward. For example, in the self-similar setting, we can explicitly compute $|P_{\I \cup \J}(s)-P_\I(s)|=\sum_{i \in \J} r_i^s$, and the more general conformal setting is not much more difficult as long one has a bounded distortion property.

The non-multiplicativity of $\phi^s$ makes the problem of obtaining bounds on $|P_{\I \cup \J}(s)-P_\I(s)|$ more challenging in the nonconformal setting, and is related to the recently settled folklore open problem of whether removing a map from a self-affine iterated function system results in a strict drop in the affinity dimension. By using the variational principle for $P_{\I}$ and $P_{\I \cup \J}$ and by proving that equilibrium states of $\phi^s$ are fully supported,  which was established in dimension $d=2$ by Feng and K\"{a}enm\"{a}ki \cite{fk}, in dimension $d=3$ by K\"{a}enm\"{a}ki and Morris \cite{morris_kaenmaki} and in arbitrary dimension by Bochi and Morris \cite{bm}, it has recently been proved that $P_{\I \cup \J}(s)>P_{\I}(s)$ (hence $s(\I \cup \J)> s(\I)$). Unfortunately for us, this method of proof yields no immediate bounds on \emph{how much} the pressure $P_{\I \cup \J}(s)$ increases from $P_\I(s)$. 

In order to prove Theorem \ref{cp}, we only need to compute an upper bound on $|P_{\I \cup \J}(s)-P_\I(s)|$ in the case that $\phi^s$ is quasimultiplicative on $\I$, which can be found in Lemma \ref{bounded change2}. On the other hand, in the almost multiplicative setting we obtain lower and upper bounds on $|P_{\I \cup \J}(s)-P_\I(s)|$, see Lemma \ref{bounded change}, which will be necessary to construct the example of a dimension spectrum which is not compact.

Before we state and prove these lemmas, we introduce some notation. Let $\I \subset \N$, $\J \subset \N \setminus \I$ and $n \in \N$. For $0 \leq j \leq n$, let $\Lambda_j$ denote all words in $(\I \cup \J)^n$ that contain $j$ instances of digits from the set $\J$. Define the \emph{ordering} of $\i=i_1 \ldots i_n \in (\I \cup \J)^n$ to be the $n$-tuple of 0s and 1s $(x_1, \ldots, x_n)$ where $x_t=1$ if $i_t \in \I$ and $x_t=0$ if $i_t \in \J$. Notice that there are ${n \choose j}$ possible orderings of words in $\Lambda_j$ and so we enumerate each of these by $1 \leq \tau \leq {n \choose j}$. We denote the set of all $\i \in \Lambda_j$ with ordering $\tau$ by $\Lambda_{j,\tau}$. We let $k_1, \ldots, k_{l_\tau}>0$ denote the lengths of consecutive strings of 1s in the $\tau$th ordering in $\Lambda_j$ so that $\sum_{t=1}^{l_\tau} k_t=n-j$ and $l_\tau \leq j+1$. Similarly, we let $m_1, \ldots, m_{\ell_\tau}$ denote the lengths of consecutive strings of 0s in the $\tau$th ordering in $\Lambda_j$ so that $\sum_{t=1}^{\ell_\tau} m_t=j$ and $\ell_\tau \leq j$.

\begin{lma} \label{bounded change}
Let $\I \subset \N$ and $\J \subset \N \setminus \I$. Suppose $\phi^s$ is almost multiplicative on $\I$ and $\I \cup \J$, for constants $c_\I, c_{\I \cup \J}>1$ respectively. Then for any $s>0$,
\begin{eqnarray}
\label{eq bc} P_{\I}(s) + \frac{1}{c_{\I \cup \J}^2}P_\J(s) \leq P_{\I \cup \J}(s) \leq P_{\I }(s) + c_\I\sum_{i \in \J} \phi^s(A_i). \end{eqnarray}
\end{lma}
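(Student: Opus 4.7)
The plan is to obtain both bounds by decomposing each word $\i \in (\I \cup \J)^n$ according to the positions of its $\J$-digits and then applying the appropriate multiplicativity property of $\phi^s$ to factor $\phi^s(A_\i)$. Concretely, each $\i \in \Lambda_j$ can be written uniquely as
$$\i = \j_1 k_1 \j_2 k_2 \cdots k_j \j_{j+1},$$
where each $\j_t \in \I^{l_t}$ is a (possibly empty) block of consecutive $\I$-digits with $l_1 + \cdots + l_{j+1} = n-j$ and each $k_t \in \J$ is a single letter. This parameterises $\Lambda_j$ via the $\binom{n}{j}$ compositions of $n-j$ into $j+1$ non-negative parts, together with the choices of the $\j_t$ and the $k_t$.

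For the lower bound I would apply almost multiplicativity of $\phi^s$ on $\I \cup \J$ to this $(2j+1)$-block factorisation (with the convention $\phi^s(A_\emptyset) = 1$), obtaining
$$\phi^s(A_\i) \geq c_{\I \cup \J}^{-2j} \prod_{t=1}^{j+1} \phi^s(A_{\j_t}) \prod_{t=1}^{j} \phi^s(A_{k_t}).$$
Summing over $\j_t, k_t$ and over the composition, and using the submultiplicativity $P_\I^a(s) P_\I^b(s) \geq P_\I^{a+b}(s)$ to estimate $\prod_t P_\I^{l_t}(s) \geq P_\I^{n-j}(s)$, together with the standard inequalities $P_\I^m(s) \geq P_\I(s)^m$ and $P_\J^1(s) \geq P_\J(s)$ (both immediate from submultiplicativity), yields
$$\sum_{\i \in \Lambda_j} \phi^s(A_\i) \geq \binom{n}{j} P_\I(s)^{n-j}\left(\frac{P_\J(s)}{c_{\I \cup \J}^2}\right)^{j}.$$
Summing over $j$ and invoking the binomial theorem produces $P_{\I \cup \J}^n(s) \geq \left(P_\I(s) + c_{\I \cup \J}^{-2}P_\J(s)\right)^n$, and the lower bound in \eqref{eq bc} follows upon taking $n$th roots and letting $n \to \infty$.

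For the upper bound I would use submultiplicativity of $\phi^s$ (first on the alternating block decomposition, then on each $\J$-block to reduce it to single letters) to obtain
$$\phi^s(A_\i) \leq \prod_{t=1}^{j+1} \phi^s(A_{\j_t}) \prod_{t=1}^{j} \phi^s(A_{k_t}).$$
Summing over choices of $\j_t$ and $k_t$ produces $T^j \prod_t P_\I^{l_t}(s)$, where $T := \sum_{i \in \J} \phi^s(A_i)$. I would then invoke almost multiplicativity on $\I$, in the form $P_\I^a(s) P_\I^b(s) \leq c_\I P_\I^{a+b}(s)$, to amalgamate the $j+1$ $\I$-blocks into a single factor $P_\I^{n-j}(s)$ at a cost of $c_\I^j$, along with the standard estimate $P_\I^m(s) \leq c_\I P_\I(s)^m$ obtained by letting $r \to \infty$ in the inequality $P_\I^{mr}(s) \geq c_\I^{-(r-1)} P_\I^m(s)^r$. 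After summing over compositions and over $j$, the binomial theorem delivers $P_{\I \cup \J}^n(s) \leq c_\I \left(P_\I(s) + c_\I T\right)^n$, whence the upper bound follows on taking $n$th roots.

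The main technical obstacle is the bookkeeping around the $c_{\I \cup \J}$ factors in the lower bound: the exponent $2j$ of $c_{\I \cup \J}^{-1}$ (arising because $j$ many $\J$-letters split the word into at most $2j+1$ sub-blocks) must align exactly with the $j$th power of $P_\J(s)$ in the binomial expansion so that the loss can be absorbed as $(c_{\I \cup \J}^{-2})^j$ and the binomial identity applies cleanly. Observing that this matching is automatic is what turns what would otherwise be a factor growing with $n$ into the clean constant $c_{\I \cup \J}^{-2}$ multiplying $P_\J(s)$.
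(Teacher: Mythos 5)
Your proposal is correct and follows essentially the same route as the paper: both decompose $(\I\cup\J)^n$ by the positions of the $\J$-digits, apply submultiplicativity of $\phi^s$ (plus almost multiplicativity on $\I$ and the bound $\sum_{\i\in\I^m}\phi^s(A_\i)\leq c_\I P_\I(s)^m$) for the upper bound and almost multiplicativity on $\I\cup\J$ across the at most $2j+1$ blocks for the lower bound, then finish with the binomial theorem. The only cosmetic difference is that the paper groups consecutive $\J$-digits into blocks in the lower bound while you treat them as singletons; the resulting constant $c_{\I\cup\J}^{-2j}$ is the same.
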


\begin{proof}
Fix $n \in \N$ and $0 \leq j\leq n$. We have
$$\sum_{\i \in (\I\cup\J)^n} \phi^s(A_\i)=\sum_{j=0}^n \sum_{\tau=1}^{{n \choose j}} \sum_{\i \in \Lambda_{j,\tau}} \phi^s(A_\i)$$
and
\begin{eqnarray*}
\sum_{\i \in \Lambda_{j,\tau}} \phi^s(A_\i) &\leq& \left(\sum_{i \in \J} \phi^s(A_i)\right)^j  \sum_{\i_1 \in \I^{k_1}} \cdots \sum_{\i_{l_\tau} \in \I^{k_{l_\tau}}} \phi^s(A_{\i_1}) \cdots \phi^s(A_{\i_{l_\tau}}) \\
&\leq& \left(\sum_{i \in \J} \phi^s(A_i)\right)^j c_{\I}^{l_{\tau}-1} \sum_{\i_1 \in \I^{k_1}} \cdots \sum_{\i_{l_\tau} \in \I^{k_{l_\tau}}} \phi^s(A_{\i_1 \ldots \i_{l_\tau}}) \\
&\leq& \left(\sum_{i \in \J} \phi^s(A_i)\right)^j c_{\I}^{j} \sum_{\i \in \I^{n-j}} \phi^s(A_\i).
\end{eqnarray*}
Also,
\begin{eqnarray*}
\sum_{\i \in \Lambda_{j,\tau}} \phi^s(A_\i)  &\geq& c_{\I \cup \J}^{-(l_\tau +\ell_\tau-1)} \sum_{\j_1 \in \J^{m_1}} \cdots \sum_{\j_{\ell_\tau} \in \J^{m_{\ell_{\tau}}}} \phi^s(A_{\j_1}) \cdots \phi^s(A_{\j_{\ell_\tau}})\sum_{\i_1 \in \I^{k_1}} \cdots \sum_{\i_{l_\tau} \in \I^{k_{l_\tau}}} \phi^s(A_{\i_1}) \cdots \phi^s(A_{\i_{l_\tau}}) \\
&\geq&  c_{\I \cup \J}^{-2j} \sum_{\j \in \J^j} \phi^s(A_\j) \sum_{\i \in \I^{n-j}} \phi^s(A_\i).
\end{eqnarray*}

Note that if $\phi^s$ is almost multiplicative on $\I \subset \N$ with constant $c_{\I}>1$ then for any $n \in \N$, 
$$P_\I(s)^{n} \leq \sum_{\i \in \I^{n}}\phi^s(A_\i) \leq c_\I P_\I(s)^{n}.$$

Therefore
\begin{eqnarray*}
P_{\I \cup \J}(s)&\leq& \lim_{n \to \infty} \left( \sum_{j=0}^n \sum_{\tau=1}^{{n \choose j}} \left(\sum_{i \in \J} \phi^s(A_i)\right)^j c_{\I}^{j} \sum_{\i \in \I^{n-j}} \phi^s(A_\i) \right)^{\frac{1}{n}} \\
&\leq& \lim_{n \to \infty} \left( \sum_{j=0}^n {n \choose j} \left(\sum_{i \in \J} \phi^s(A_i)\right)^j c_{\I}^{j+1} (P_{\I }(s))^{n-j} \right)^{\frac{1}{n}}\\
&=& c_\I\sum_{i \in \J} \phi^s(A_i)+ P_{\I }(s).
\end{eqnarray*}
Similarly, we also have
\begin{eqnarray*}
P_{\I \cup \J}(s)&\geq& \lim_{n \to \infty} \left( \sum_{j=0}^n \sum_{\tau=1}^{{n \choose j}} c_{\I \cup \J}^{-2j} \sum_{\j \in \J^j} \phi^s(A_\j) \sum_{\i \in \I^{n-j}} \phi^s(A_\i)  \right)^{\frac{1}{n}} \\
&\geq& \lim_{n \to \infty} \left( \sum_{j=0}^n {n \choose j} c_{\I \cup \J}^{-2j}P_\J(s)^j P_\I(s)^{n-j} \right)^{\frac{1}{n}}\\
&=& \frac{1}{c_{\I \cup \J}^2}P_\J(s)+ P_{\I }(s).
\end{eqnarray*}
\end{proof}

Notice that if there exists a constant $C>0$ for which $C^{-1} \leq \frac{\tilde{\phi}^s(A_\i)}{\phi^s(A_\i)} \leq C$ for all $\i \in \N^{\N}$ and $\tilde{\phi}^s$ is almost multiplicative on $\I$ and $\I\cup \J$ for constants $\tilde{c}_\I, \tilde{c}_{\I \cup \J}>1$ respectively, then the analogue of (\ref{eq bc}) also holds where $c_\I$, $c_{\I \cup \J}$ and $\phi^s$ are replaced by $\tilde{c}_\I, \tilde{c}_{\I \cup \J}$ and $\tilde{\phi}^s$ respectively. Using this observation we rephrase Lemma \ref{bounded change} for positive matrices in $\mathcal{GL}_2(\R)$ explicitly in terms of the matrix entries. Recall that for $\I \subset \N$, $\kappa(\I)$ was defined in Definition \ref{kappa} as the minimum possible column ratio of matrices indexed by a digit in $\I$. We also recall that for a positive matrix $A \in \mathcal{GL}_2(\R)$, $\norm{A}'$ simply denotes the sum of its entries.

\begin{lma} \label{positive change}
Let $\I \subset \N$ and $\J \subset \N \setminus \I$. Suppose that $\kappa(\I), \kappa(\J)>0$. Then for any $0<s\leq 1$,
\begin{eqnarray}
\label{eq pc} P_{\I}(s) + \frac{(\kappa(\I \cup \J))^{2s}}{4^s}P_\J(s) \leq P_{\I \cup \J}(s) \leq P_{\I }(s) + \left(\frac{2}{\kappa(\I)}\right)^s\sum_{i \in \J} (\norm{A_i}')^s \end{eqnarray}
and for any $1 <s \leq 2$,
\begin{eqnarray}
\label{eq pc2} P_{\I}(s) + \frac{(\kappa(\I \cup \J))^{4-2s}}{4^{2-s}}P_\J(s) \leq P_{\I \cup \J}(s) \leq P_{\I }(s) + \left(\frac{2}{\kappa(\I)}\right)^{2-s}\sum_{i \in \J} (\norm{A_i}')^{2-s} |\det A_i|^{s-1}. \end{eqnarray}
\end{lma}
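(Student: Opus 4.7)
The plan is to reduce Lemma \ref{positive change} to Lemma \ref{bounded change} via the observation immediately following the latter. To this end, I introduce an auxiliary singular value function built from $\norm{\cdot}'$: set $\tilde{\phi}^s(A) := (\norm{A}')^s$ for $s \in (0,1]$ and $\tilde{\phi}^s(A) := (\norm{A}')^{2-s} |\det A|^{s-1}$ for $s \in (1,2]$. The strategy is to verify (i) that $\tilde{\phi}^s$ is uniformly comparable to $\phi^s$ and (ii) that $\tilde{\phi}^s$ is almost multiplicative with the correct constants, and then to quote the observation after Lemma \ref{bounded change}.

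For (i), since $\norm{\cdot}$ and $\norm{\cdot}'$ are two norms on the finite-dimensional space of $2\times 2$ matrices (with $\norm{\cdot}'$ a norm on the positive cone), equivalence of norms yields a constant $C = C(s) > 0$ with $C^{-1} \leq \tilde{\phi}^s(A)/\phi^s(A) \leq C$ for every positive $A \in \mathcal{GL}_2(\R)$. This supplies exactly the hypothesis required by the observation after Lemma \ref{bounded change}.

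For (ii), Lemma \ref{pos almost} gives the lower bound $\norm{A_\i A_\j}' \geq (\kappa(\I)/2) \norm{A_\i}' \norm{A_\j}'$ for all $\i, \j \in \I^*$, while the matching upper bound $\norm{A_\i A_\j}' \leq \norm{A_\i}' \norm{A_\j}'$ follows from a direct entry-wise computation using that $A_\i$ and $A_\j$ have non-negative entries (if $\mathbf{1}^T A_\i = (u_1, u_2)$ and $A_\j \mathbf{1} = (v_1, v_2)^T$ then $\mathbf{1}^T A_\i A_\j \mathbf{1} = u_1 v_1 + u_2 v_2 \leq (u_1+u_2)(v_1+v_2)$). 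Combining these with the exact multiplicativity of $|\det \cdot|$ and raising to the appropriate power shows that $\tilde{\phi}^s$ is almost multiplicative on $\I$ with constant $\tilde{c}_\I = (2/\kappa(\I))^s$ for $s \in (0,1]$ and $\tilde{c}_\I = (2/\kappa(\I))^{2-s}$ for $s \in (1,2]$. Since by hypothesis $\kappa(\I), \kappa(\J) > 0$, we have $\kappa(\I \cup \J) = \min(\kappa(\I), \kappa(\J)) > 0$, and the same reasoning yields almost multiplicativity on $\I \cup \J$ with $\tilde{c}_{\I \cup \J} = (2/\kappa(\I \cup \J))^s$ or $(2/\kappa(\I \cup \J))^{2-s}$ accordingly.

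Finally, applying the observation after Lemma \ref{bounded change} with $\tilde{\phi}^s, \tilde{c}_\I, \tilde{c}_{\I \cup \J}$ in place of $\phi^s, c_\I, c_{\I \cup \J}$ produces inequalities of the form $P_\I(s) + \tilde{c}_{\I \cup \J}^{-2} P_\J(s) \leq P_{\I \cup \J}(s) \leq P_\I(s) + \tilde{c}_\I \sum_{i \in \J} \tilde{\phi}^s(A_i)$. Substituting the explicit values of $\tilde{c}_\I, \tilde{c}_{\I \cup \J}$ and $\tilde{\phi}^s(A_i)$ and simplifying (using $\tilde{c}_{\I\cup\J}^{-2} = \kappa(\I\cup\J)^{2s}/4^s$ or $\kappa(\I\cup\J)^{4-2s}/4^{2-s}$) yields (\ref{eq pc}) for $s \in (0,1]$ and (\ref{eq pc2}) for $s \in (1,2]$. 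There is no genuine obstacle here beyond bookkeeping: the main care is needed in tracking the two cases separately and confirming the exponents and constants match the statement. The one substantive step is verifying sub-multiplicativity of $\norm{\cdot}'$ on positive products, which, as indicated above, is immediate from expanding $\mathbf{1}^T A_\i A_\j \mathbf{1}$.
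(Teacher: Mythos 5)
Your proposal is correct and follows essentially the same route as the paper, whose proof of this lemma is just the one-line remark that it ``follows directly from Lemma \ref{pos almost} and the fact that all matrix norms are equivalent''; you have simply filled in the bookkeeping (the definition of $\tilde{\phi}^s$, the submultiplicativity of $\norm{\cdot}'$ on non-negative matrices, the exact multiplicativity of the determinant, and the translation of the constants) that the paper leaves implicit. The exponents and constants you obtain match \eqref{eq pc} and \eqref{eq pc2} exactly.
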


\begin{proof}
Proof follows directly from Lemma \ref{pos almost} and the fact that all matrix norms are equivalent. 
\end{proof}

Next we tackle the quasimultiplicative setting.

\begin{lma} \label{bounded change2}
Suppose $\phi^s$ is quasimultiplicative on $\I$ with constant $c_\I>1$. For each $s >0$ there exists a constant $C_\I$ that depends only on $\I$ and $s$ such that for any $\J \subset \N \setminus \I$,
$$P_\I(s)< P_{\I \cup \J}(s) \leq P_\I(s) + C_\I \sum_{i \in \J} \phi^s(A_i).$$
\end{lma}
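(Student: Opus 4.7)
The plan is to follow the combinatorial enumeration strategy used in the proof of Lemma \ref{bounded change}, but to substitute the almost multiplicativity hypothesis on $\I \cup \J$ (which we no longer have) with a weaker consequence derived from quasimultiplicativity on $\I$. The strict lower bound $P_\I(s) < P_{\I \cup \J}(s)$ is not the substantive content of the lemma, so I would simply cite the recently established results of Feng--K\"aenm\"aki in dimension two and of Bochi--Morris in general dimension, referenced in the introduction. The rest of the proof is devoted to the upper bound. Throughout I may assume $P_\I(s) < \infty$, since otherwise the upper bound is vacuous.

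The central technical step is to promote the quasimultiplicativity of $\phi^s$ on individual products to an almost multiplicativity of the sums $a_n := \sum_{\i \in \I^n} \phi^s(A_\i)$. Specifically, I would show the existence of a constant $D_0 = D_0(\I, s)$ such that
\[
D_0^{-1} a_m a_n \leq a_{m+n} \leq a_m a_n \quad \text{for all } m, n \in \N.
\]
The upper inequality is immediate from submultiplicativity. For the lower inequality, I would apply the quasimultiplicativity relation $\phi^s(A_\i) \phi^s(A_\j) \leq c_\I \phi^s(A_{\i \k(\i, \j) \j})$ pointwise and then sum over $(\i, \j) \in \I^m \times \I^n$; grouping pairs according to which connector $\k \in \Gamma$ is selected, the double sum is dominated by at most $|\Gamma|$ copies of $a_{m + n + |\k|}$, and a further use of submultiplicativity to absorb the connector length gives the lower inequality with $D_0 := c_\I |\Gamma| a_1^M$, where $M := \max_{\k \in \Gamma} |\k|$ and $a_1 = \sum_{i \in \I} \phi^s(A_i)$. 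Iterating the supermultiplicative inequality and comparing to $\lim_n a_n^{1/n} = P_\I(s)$ (Fekete's lemma) then yields $a_r \leq D_0 P_\I(s)^r$ for every $r \in \N$.

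With this in hand, the remainder of the argument mirrors the upper-bound computation in Lemma \ref{bounded change}. After decomposing $(\I \cup \J)^n$ into the classes $\Lambda_{j, \tau}$ from the notation of that proof, submultiplicativity on individual words factors $\phi^s(A_\i)$ across its $\I$- and $\J$-blocks, bounding each $\J$-block sum by $(\sum_{i \in \J} \phi^s(A_i))^{m_{t'}}$. The $\I$-block product $\prod_{t=1}^{l_\tau} a_{k_t}$ is then controlled by $D_0^{l_\tau - 1} a_{n-j} \leq D_0^{l_\tau} P_\I(s)^{n-j} \leq D_0^{j+1} P_\I(s)^{n-j}$, using Step 1 together with $l_\tau \leq j+1$. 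Summing over $\tau$ and $j$ via the binomial theorem gives
\[
\sum_{\i \in (\I \cup \J)^n} \phi^s(A_\i) \leq D_0 \bigl(P_\I(s) + D_0 \sum_{i \in \J} \phi^s(A_i)\bigr)^n,
\]
and taking $n$-th roots produces the desired upper bound with $C_\I := D_0$.

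The main obstacle, compared with the almost multiplicative setting of Lemma \ref{bounded change}, is the derivation of the summed almost multiplicativity in Step 1 from the weaker quasimultiplicativity hypothesis. Crucially, the obstruction to individual-word almost multiplicativity (the need for a connector $\k \in \Gamma$ between words) becomes only a controlled combinatorial overhead once we pass to sums, since the finitely many choices of $\k$ yield only a constant factor $|\Gamma|$ after summation.
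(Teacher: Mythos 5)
Your proposal is correct and follows essentially the same route as the paper: cite the strict monotonicity of the affinity dimension (Bochi--Morris) for the strict lower bound, then for the upper bound decompose $(\I\cup\J)^n$ into the classes $\Lambda_{j,\tau}$, factor with submultiplicativity, control each $\I$-block sum by $C_\I P_\I(s)^{k_t}$, and conclude with the binomial theorem. The only difference is that the paper simply quotes \cite[Lemma 3.1]{kaenmaki_reeve} for the bound $\sum_{\i\in\I^n}\phi^s(A_\i)\leq C_\I P_\I(s)^n$, whereas you reprove it inline via the summed supermultiplicativity $D_0^{-1}a_m a_n\leq a_{m+n}$ and Fekete's lemma; your derivation is sound (modulo the cosmetic point that $D_0$ should involve $\max\{1,a_1\}^{M}$ rather than $a_1^{M}$ to cover $a_1<1$).
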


\begin{proof}
The lower bound follows from the strict monotonicity of the affinity dimension \cite[Theorem 2]{bm}, therefore it is sufficient to prove the upper bound. 

As before we fix $n \in \N$ and $0 \leq j \leq n$ and write
$$\sum_{\i \in (\I\cup\J)^n} \phi^s(A_\i)=\sum_{j=0}^n \sum_{\tau=1}^{{n \choose j}} \sum_{\i \in \Lambda_{j,\tau}} \phi^s(A_\i).$$
In particular,
$$\sum_{\i \in \Lambda_{j,\tau}} \phi^s(A_\i) \leq \left(\sum_{i \in \J} \phi^s(A_i)\right)^j  \sum_{\i_1 \in \I^{k_1}} \cdots \sum_{\i_{l_\tau} \in \I^{k_{l_\tau}}} \phi^s(A_{\i_1}) \cdots \phi^s(A_{\i_{l_\tau}}) .$$
By \cite[Lemma 3.1]{kaenmaki_reeve}, since $\phi^s$ is quasimultiplicative on $\I$,
$$\sum_{\i \in \I^n} \phi^s(A_\i) \leq c_\I K \max\{1,P_\I(s)^K\}P_\I(s)^n$$
where $K= \max\{|\i|: \i \in \Gamma\}$. Put $C_\I=c_\I K \max\{1,P_\I(s)^K\}$, which clearly depends only on $\I$ and $s$. Then
\begin{eqnarray*}
\sum_{\i \in \Gamma_{\j, \tau}} \phi^s(A_\i) &\leq& \left(\sum_{i \in \J} \phi^s(A_i)\right)^j  C_\I^{l_\tau} P_{\I}(s)^{k_1 + \cdots +k_{l_\tau}}\\
&\leq &\left(\sum_{i \in \J} \phi^s(A_i)\right)^j  C_\I^{j}  P_{\I}(s)^{n-j}.
\end{eqnarray*}
 Therefore,
\begin{eqnarray*}
P_{\I}(s)=\lim_{n \to \infty}\left(\sum_{\i \in \I^n} \phi^s(A_\i)\right)^{\frac{1}{n}}&\leq& \lim_{n \to \infty}\left(\sum_{j=0}^n \sum_{\tau=1}^{{n \choose j}} \left(\sum_{i \in \J} \phi^s(A_i)\right)^j  C_\I^{j}  P_{\I}(s)^{n-j}\right)^{\frac{1}{n}} \\
&=&\lim_{n \to \infty}\left(\sum_{j=0}^n {n \choose j} \left(\sum_{i \in \J} \phi^s(A_i)\right)^j  C_\I^{j}  P_{\I}(s)^{n-j}\right)^{\frac{1}{n}} \\
&=& C_\I\left(\sum_{i \in \J} \phi^s(A_i)\right) + P_{\I}(s),
\end{eqnarray*}
proving the upper bound.

\end{proof}

\section{Compactness and perfectness of the affinity spectrum}

Throughout this section we  consider $\A=\{A_i\}_{i \in \N} \subset \mathcal{GL}_2(\R)$ with the property that any subset of $\A$ is irreducible. Hence by Proposition \ref{feng quasi}, for all $s \geq 0$, $\phi^s$ is quasimultiplicative on any $\I \subset \N$. We define the affinity spectrum $D(\A)$ of $\A$ as the analogue of the Hausdorff dimension spectrum for the affinity dimension, that is,
$$D(\A):=\{ s(\I) \; : \; \I \subset \N\}.$$
We will prove that under the above irreducibility assumption, $D(\A)$ is compact and perfect.  Theorem \ref{cp} will be a straightforward corollary of this. 

 Recall that the finiteness parameter $\theta$ is defined as the unique real number such that $P_\N(s)= \infty$ for $s<\theta$ and $P_\N(s)< \infty$ for $s> \theta$. Note that $P_\N(\theta)$ can either be finite or infinite. Now that we are equipped with the pressure estimates from the previous section, $D(\A)$ can be shown to be compact in a similar way to how the dimension spectrum of conformal IFS was shown to be compact in \cite{perfect}.

\begin{lma} \label{compact}
Suppose every subset of $\A\subset \mathcal{GL}_2(\R)$ is irreducible. Then $D(\A)$ is compact.
\end{lma}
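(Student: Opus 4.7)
For any $\I \subset \N$, $s(\I) \leq s(\N)$ by monotonicity of the pressure in $\I$, and $s(\N)$ is finite since for $t$ sufficiently large, $\phi^t(A_i) = |\det A_i|^{t/2}$ decays rapidly enough (using $\sup_i \|A_i\| < 1$) to force $P_\N(t) < 1$. Hence $D(\A) \subset [0, s(\N)]$ is bounded.

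\textbf{Closedness --- strategy.} Let $s_n = s(\I_n)$ converge to $s$. The aim is to produce $\I \subset \N$ with $s(\I) = s$. After passing to a subsequence, I may assume $s_n$ is monotonic; I focus on the case $s_n \nearrow s$ (the case $s_n \searrow s$ is handled analogously, using $s$ in place of $s_n$ as the target at each stage, which is available because then $s(\I_n) \geq s$ throughout). I construct inductively a nested sequence of \emph{finite} subsets $\J_1 \subset \J_2 \subset \cdots \subset \N$ with $s(\J_n) \in [s_n - \varepsilon_n, s_n]$ and $\varepsilon_n \to 0$. Setting $\I := \bigcup_n \J_n$, since uniform quasi-multiplicativity of $\phi^t$ on every subset of $\A$ is guaranteed by Proposition \ref{feng quasi}, the argument from the proof of Theorem \ref{kr} gives $P_\I(t) = \lim_n P_{\J_n}(t)$, whence $s(\I) = \lim_n s(\J_n) = s$.

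\textbf{Inductive construction.} At step $n$, given finite $\J_{n-1}$ with $s(\J_{n-1}) \leq s_n$, consider the nested finite sets $\J_{n-1} \cup (\I_n \cap \{1, \ldots, k\})$ as $k$ grows. By increasing-union continuity their affinity dimensions rise monotonically to $s(\J_{n-1} \cup \I_n) \geq s(\I_n) = s_n$. Let $k^*_n$ be the largest $k$ for which this dimension remains $\leq s_n$, and set $\J_n := \J_{n-1} \cup (\I_n \cap \{1, \ldots, k^*_n\})$. Then $s(\J_n) \leq s_n$ by construction, and the deficit $\varepsilon_n := s_n - s(\J_n)$ is bounded by the single-digit dimension jump when adjoining the next element $j_n \in \I_n$ larger than $k^*_n$.

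\textbf{Main obstacle.} Ensuring $\varepsilon_n \to 0$ is the main technical difficulty. Lemma \ref{bounded change2} bounds the associated pressure jump by $C_{\J_n}\, \phi^{s_n}(A_{j_n})$, which combined with the strict monotonicity and continuity of $t \mapsto P_{\J_n}(t)$ yields a bound on $\varepsilon_n$. Since $P_{\I_n}(s_n) \leq 1$ forces $\sum_{j \in \I_n} \phi^{s_n}(A_j) < \infty$, the tail values $\phi^{s_n}(A_j)$ decay along $\I_n$. The delicate point is to arrange the construction so that the chosen $j_n$ falls in this decaying tail --- achieved by taking $k^*_n$ as large as possible, which in turn may require running the greedy step with $\bigcup_{m \geq n} \I_m$ (or some other enlargement) in place of $\I_n$, giving enough "room" to absorb many small-contribution digits before the constraint $s(\J_n) \leq s_n$ is saturated.
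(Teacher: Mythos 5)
Your boundedness step is fine, but the closedness argument has a genuine gap at exactly the point you flag and do not resolve: there is no mechanism forcing $\varepsilon_n\to 0$. In your greedy construction the deficit $s_n-s(\J_n)$ is controlled by the pressure jump $C_{\J_n}\,\phi^{s_n}(A_{j_n})$ of the single \emph{blocking} digit $j_n$, but $j_n$ is determined by where the constraint $s(\J_n)\le s_n$ saturates, not by you; it can perfectly well be the same small digit (say $j_n=1$, with $\phi^{s}(A_1)$ bounded away from $0$) at every stage, in which case the bound is vacuous and $\varepsilon_n$ need not shrink. Enlarging the pool to $\bigcup_{m\ge n}\I_m$ does not help: the digits with small contribution are the large-index ones, and $s(\J_{n-1}\cup(\I_n\cap\{N,N+1,\dots\}))$ may be well below $s_n$ for every $N$ when $P_\N(s)<\infty$ --- the small digits of $\I_n$ can be essential to achieving $s(\I_n)=s_n$, yet adjoining one of them may overshoot by a non-vanishing amount. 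So the greedy scheme can genuinely stall, and the proposal as written does not prove closedness. (A smaller issue: if $s(\J_{n-1}\cup\I_n)=s_n$ exactly, your ``largest $k$'' does not exist and $\J_n$ threatens to be infinite; this is fixable, but the deficit problem is not.)

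The paper avoids this by a different mechanism. When $P_\N(s)<\infty$ it does not build $\I$ greedily from pressure considerations at all: it extracts, by a pigeonhole/diagonal argument, the digits $k_1<k_2<\cdots$ that lie in infinitely many of the $\I_n$ (in a nested fashion), sets $\I=\{k_i\}$, gets $s(\I)\le s$ from Theorem \ref{kr} because every finite truncation of $\I$ sits inside infinitely many $\I_n$, and then proves $s(\I)\ge s$ \emph{by contradiction}: if $s(\I)<s$, Lemma \ref{bounded change2} shows $P_{\I\cup\{N,N+1,\dots\}}(s)<1$, forcing the $\I_n$ to keep meeting $\{1,\dots,N-1\}\setminus\I$, which contradicts the exhaustiveness of the selection of the $k_i$. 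Your greedy idea does appear in the paper, but only in the complementary case $P_\N(s)=\infty$, where $P_{\{N,N+1,\dots\}}(s)=\infty$ for every $N$ guarantees that the construction can always continue using arbitrarily large digits, so the blocking digit's contribution really does tend to $0$. To repair your argument you would either need to supply that missing mechanism in the finite-pressure case (which I do not see how to do) or switch to the paper's two-sided pigeonhole argument.
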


\begin{proof}
Let $\{\I_n\}$ be a sequence of subsets of $\N$ such that $\lim_{n \to \infty} s(\I_n)=s$, equivalently $\lim_{n \to \infty} P_{\I_n}(s)=1$. We will construct a set $\I \subset \N$ such that $P_\I(s)=1$. First we consider the case that $P_\N(s)< \infty$, that is, either $s> \theta$ or $s=\theta$ and $P_\N(\theta)< \infty$.

First, suppose there does not exist $k \in \N$ such that $k \in \I_n$ for infinitely many $n \in \N$. Note that for sufficiently large $N \in \N$, $P_{\{N, N+1, \ldots\}}(s) \leq  \sum_{n=N}^{\infty} \phi^s(A_n)<1$ and therefore $s(\{N, N+1, \ldots\})<s$. Therefore, for all $n$ sufficiently large, $\I_n$ must intersect the set $\{1, \ldots, N-1\}$. In particular there must be at least one $k \in \{1, \ldots, N-1\}$ such that $k \in \I_n$ for infinitely many $n \in \N$. We let $k_1$ denote the smallest digit with this property:
$$k_1:=\min\{k \in \N \; : \; k \in \I_n \; \textnormal{for infinitely many} \; n \in \N\}.$$
Also, let
$$G_1:= \{n \in \N \; : \; k_1 \in \I_n\}$$
and
$$G_1^k:= \{n \in G_1 \; : \; k \in \I_n\}.$$
We claim that for some $k \in \N \setminus \{k_1\}$, $G_1^k$ is an infinite set. To see this, observe that $P_{\{k_1\}}(s) \leq \phi^s(A_{k_1})<1$, therefore for sufficiently large $N \in \N$,
$$P_{\{k_1, N, N+1, \ldots\}}(s) \leq \phi^s(A_{k_1}) +  \sum_{n=N}^{\infty} \phi^s(A_n) < 1.$$
Therefore, for sufficiently large $n \in G_1$, each $\I_n$ must intersect $\{1, \ldots, N-1\} \setminus \{k_1\}$, and therefore there exists $k \in \{1, \ldots, N-1\} \setminus \{k_1\}$ such that $k \in \I_n$ for infinitely many $ n \in G_1$. Let $k_2$ denote the smallest digit with this property, and observe that $k_2>k_1$.

Now, define $G_2=G_1^{k_2}$. Inductively we can define a collection $\{k_i\}_{i=1}^p$ where $2 \leq p \leq \infty$ and $G_1 \supset G_2 \ldots$ such that
$$G_{i+1}:=\{n \in G_i \; : \; k_{i+1} \in \I_n\}$$
and
$$k_{i+1}:= \min\{k \in \N \; : \; k \in \I_n \setminus \{k_1, \ldots, k_i\} \; \textnormal{for infinitely many } \; n \in G_i\}.$$
Now, put $\I=\{k_i\}_{i=1}^p$. Notice that by construction, for any $N \in \N$, $\I \cap \{1, \ldots, N\} \subset \I_n$ for infinitely many $n \in \N$. Therefore
$$s(\I \cap \{1, \ldots, N\}) \leq \lim_{n \to \infty} s(\I_n)=s$$
and in particular $s(\I) \leq s$, since by Theorem \ref{kr}, $s(\I \cap \{1, \ldots, N\}) \to s(\I)$ as $N \to \infty$.

Now, suppose for a contradiction that $s(\I)<s$. By assumption $\{A_i\}_{i \in \I}$ is irreducible, so we can let $C_\I$ denote the constant from Lemma \ref{bounded change2}. For sufficiently large $N \in \N$,
$$P_{\I \cup \{N, N+1, \ldots\}}(s)< P_\I(s)+ C_\I \sum_{n=N}^{\infty} \phi^s(A_n)<1$$
and therefore for all sufficiently large $n \in \N$, $\I_n$ must intersect $\{1, \ldots, N-1\} \setminus \I$. If $p$ is finite this immediately implies there exists $k \in \{1, \ldots, N-1\} \setminus \{k_i\}_{i=1}^p$ such that $k \in \I_n$ for infinitely many $n \in G_p$, contradicting the fact that $k_p$ is the last digit with this property. On the other hand if $p = \infty$ then we can fix $q$ sufficiently large such that $k_q>N$. Then there exists $k \in \{1, \ldots, N-1\} \setminus \{k_i\}_{i=1}^p$ such that for infinitely many $n \in G_q$, $k \in \I_n $. This contradicts that $k_{q}$ was chosen minimally. Since either way we obtain a contradiction, it follows that $s(\I)=s$. 

Next we tackle the case where $P_\N(s)= \infty$, that is, either $s< \theta$ or $s= \theta$ and $P_\N(\theta)= \infty$. We will prove that $s$ is in the spectrum, thus proving that $[0,\theta]$ is contained in the spectrum, i.e. that part of the spectrum is compact. Note that since $P_\N(s)= \infty$, this implies that $P_{\{N, N+1, \ldots\}}(s)= \infty$ for any $N \in \N$. To see this, fix $N$ and note that since $\{A_i\}_{i \in \{N, N+1, \ldots\}}$ is irreducible, we can denote the constant from Lemma \ref{bounded change2} by $C_{\{N, N+1, \ldots\}}$. Observe that
$$\infty= P_\N(s) \leq P_{\{N, N+1, \ldots\}}(s)+ C_{\{N, N+1, \ldots\}} \sum_{n=1}^{N-1} \phi^s(A_n).$$
Since the second term is finite, this implies $P_{\{N, N+1, \ldots\}}(s)= \infty$. 

Without loss of generality we can assume that there does not exist a finite set $\I \subset \N$ such that $P_\I(s)=1$. Define $\I_1=\{1, \ldots, N\}$ where $N$ is chosen such that $P_{\I_1}(s)<1$ and $P_{\I_1 \cup \{\max \I_1 +1\}}(s)>1$. Define a sequence of sets $\I_n$ inductively by $\I_n= \I_{n-1} \cup \{\max \I_{n-1} +N_1, \ldots, \max \I_n+ N_2\}$ where $N_1$ is chosen such that $P_{\I_{n-1} \cup \{\max \I_{n-1} +N_1-1\}}(s)>1$ whereas $P_{\I_{n-1} \cup \{\max \I_{n-1}+N_1\}}(s)<1$ and $N_2$ is chosen such that $P_{\I_{n}}(s)<1$ but $P_{\I_{n} \cup \{\max \I_{n}+1\}}(s)>1$. Note that the construction of the sets $\I_n$ is possible since for any $N \in \N$, $P_{\{N, N+1, \ldots\}}(s)=\infty$. Let $\I_n'=\I_n \cup \{\max \I_n +1\}$ and $\I= \bigcup_{n \in \N} \I_n$. Let $\Gamma$ be the set of connectors for $\I$ and notice that for all $n$ sufficiently large, $\Gamma \subset \I_n$, hence $C_{\I_n}=C_\I$. In what follows we consider $n$ sufficiently large for this to be the case.

Since $P_{\I_n}(s)<1<P_{\I_n'}(s)$ for all $n \in \N$, and $P_{\I_n}(s)-P_{\I_n'}(s) \leq C_{\I_n} \phi^s(A_{\max \I_n+1}) = C_\I \phi^s(A_{\max \I_n+1}) \to 0$ as $n \to \infty$, it follows that $P_{\I_n}(s) \to 1$ as $n \to \infty$. Also, for any $N \in \N$ and $n \in \N$, $P_{\I \cap \{1, \ldots, N\}}(s)<1$ and therefore it follows by Theorem \ref{kr} that $P_{\I}(s) \leq 1$. Moreover, $1-P_{\I}(s)\leq P_{\I_n'}(s)-P_{\I_n}(s) \to 0$ as $n \to \infty$ and therefore $P_{\I}(s)= \lim_{n \to \infty} P_{\I_n}(s)=1$, thus we are done.  
\end{proof}

We can also use the pressure estimates from the previous section to prove perfectness of $D(\A)$.

\begin{lma} \label{perfect}
Suppose every subset of $\A\subset \mathcal{GL}_2(\R)$ is irreducible. Fix $\I \subset \N$ and $\epsilon>0$. There exists $\J \subset \N$, $\J \neq \I$ such that $0<|s(\I)-s(\J)|< \epsilon$. In particular $D(\A)$ is perfect.
\end{lma}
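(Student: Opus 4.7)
The plan is to perturb $\I$ by adding or removing a single digit to produce $\J$ with $s(\J)$ close to but distinct from $s(\I)$. The main tools will be Lemma \ref{bounded change2} (to control the change in pressure) and the uniform estimate $P_\J(s_2)\leq \alpha^{s_2-s_1}P_\J(s_1)$ for $s_1<s_2$, which follows directly from $\|A_i\|\leq \alpha<1$ together with the piecewise form of $\phi^s$. Whether to add or to remove a digit will depend on the cardinality of $\I$.

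First, suppose $\I$ is infinite. I would take $\J=\I_n:=\I\cap\{1,\ldots,n\}$ for large $n$. Since $\I_n$ is a proper subset of $\I$, strict monotonicity of the affinity dimension \cite[Theorem 2]{bm} yields $s(\I_n)<s(\I)$. By Proposition \ref{feng quasi}, $\phi^s$ is quasimultiplicative on $\I$, so \cite[Proposition 3.2]{kaenmaki_reeve} gives $P_{\I_n}(s)\to P_\I(s)$ pointwise on the finiteness region, whence a standard monotonicity-plus-continuity argument forces $s(\I_n)\to s(\I)$. Taking $n$ sufficiently large then gives the required $\J$.

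Next, suppose $\I$ is finite, so that $\N\setminus\I$ is infinite. I would set $s^*:=s(\I)+\epsilon$ and $\J_k:=\I\cup\{k\}$ for $k\in\N\setminus\I$; strict monotonicity again gives $s(\J_k)>s(\I)$. Applying Lemma \ref{bounded change2} at $s=s^*$,
$$P_{\J_k}(s^*) \;\leq\; P_\I(s^*)+C_\I\,\phi^{s^*}(A_k),$$
while the uniform estimate above gives $P_\I(s^*)\leq \alpha^\epsilon P_\I(s(\I))=\alpha^\epsilon<1$. Since $\phi^{s^*}(A_k)\to 0$ as $k\to\infty$, for $k$ sufficiently large we obtain $P_{\J_k}(s^*)<1$, forcing $s(\J_k)<s^*$. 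Combined with $s(\J_k)>s(\I)$, this gives $0<s(\J_k)-s(\I)<\epsilon$ as required.

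The main subtlety will be justifying the decay $\phi^{s^*}(A_k)\to 0$ along $k\in\N\setminus\I$: this is automatic whenever $s^*$ lies strictly above the finiteness parameter of $P_\N$, so that $\sum_k\phi^{s^*}(A_k)<\infty$, and this covers the case $s(\I)>\theta$. When $s(\I)\leq\theta$, one may instead appeal to the containment $[0,\theta]\subset D(\A)$ already established in the proof of Lemma \ref{compact}: every point of $[0,\theta]$ is an accumulation point of the spectrum, and the same sequential construction used there provides the required $\J$.
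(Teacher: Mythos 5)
Your proposal is correct and follows essentially the same route as the paper's own proof: for infinite $\I$ it truncates to $\I\cap\{1,\ldots,n\}$ and combines Theorem \ref{kr} with strict monotonicity of the affinity dimension, and for finite $\I$ it adjoins a single large digit and applies Lemma \ref{bounded change2} at a level $s^*$ slightly above $s(\I)$. The only difference is that you explicitly justify the decay $\phi^{s^*}(A_k)\to 0$ (via finiteness of the first-level sum above the finiteness parameter, with the fallback $[0,\theta]\subset D(\A)$ otherwise), a point the paper's argument leaves implicit.
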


\begin{proof}
Fix $\I \subset \N$. First we consider the case that $\I$ is infinite. By Theorem \ref{kr}, $\lim_{n \to \infty} s(\I \cap \{1,\ldots,n\})=s(\I)$. Moreover, the monotonicity of the affinity dimension \cite{bm} implies that for any $n \in \N$, $s(\I \cap \{1,\ldots,n\})<s(\I)$, completing the proof. 

Next, if $\I$ is finite, let $s>s(\I)$. By Lemma \ref{bounded change2} we have
$$P_\I(s)<P_{\I \cup \{n\}}(s) \leq P_\I(s)+C_\I \phi^s(n)$$
for any $n \in \N \setminus \I$. Since $P_\I(s)<1$ we can choose $n \in \N \setminus \I$ sufficiently large that $P_{\I \cup \{n\}}(s)<1$. In particular, $s<s(\I \cup\{n\})<s(\I)$, completing the proof.
\end{proof}

Note that the proofs of Lemmas \ref{compact} and \ref{perfect} both follow from quasimultiplicativity of $\phi^s$ for all $s \in [0,2]$. As such, we can also assert that $D(\A)$ is compact and perfect when $\A \subset \mathcal{GL}_d(\R)$ satisfies the irreducibility property that guarantees quasimultiplicativity of $\phi^s$ for all $s \in [0,d]$, see \cite[Lemma 3.5]{morris_kaenmaki}.
 
\vspace{2mm}

\noindent \emph{Proof of Theorem \ref{cp}.} If every subset of $\A$ is strongly irreducible and if the translations in $\F=\{S_i(x)=A_i(x)+t_i\}_{i \in \I}$ are chosen such that $\F$ satisfies the SOSC,  then $D(\F)=D(\A)$ by  Theorem \ref{bhr}. Thus, the proof follows from Lemmas \ref{perfect} and \ref{compact}. \qed

\section{Non-compact dimension spectra containing isolated points}

In this section we investigate the dimension spectrum of self-affine systems which lie outside the scope of Theorem \ref{cp}. In particular, in Section \ref{original} we prove Theorem \ref{main} by constructing a self-affine IFS whose dimension spectrum is not compact and in Section \ref{annoying} we prove Theorem \ref{isol} by constructing a self-affine IFS whose dimension spectrum contains an isolated point.


\subsection{Example of a non-compact dimension spectrum} \label{original}

In this section we construct a self-affine IFS whose dimension spectrum is not closed, hence not compact, proving Theorem \ref{main}. 

Let $3<\beta<\gamma$ and $\frac{1}{2}<b<d<1$. Fix $A^{\beta,\gamma}=\begin{pmatrix}\frac{1}{\beta}&0\\0&\frac{1}{\gamma}\end{pmatrix} $ and
$$A^{\beta,\gamma}_1=A^{\beta,\gamma}_2=A^{\beta,\gamma}_3= \begin{pmatrix} 1&-1 \\ \frac{1}{2}&\frac{1}{2}\end{pmatrix} \begin{pmatrix}\frac{1}{\beta}&0\\0&\frac{1}{\gamma}\end{pmatrix} \begin{pmatrix} \frac{1}{2}&1 \\ -\frac{1}{2}&1\end{pmatrix} = \begin{pmatrix} \frac{1}{2}(\frac{1}{\beta}+\frac{1}{\gamma}) & \frac{1}{\beta}-\frac{1}{\gamma} \\ \frac{1}{4}(\frac{1}{\beta}-\frac{1}{\gamma}) & \frac{1}{2}(\frac{1}{\beta}+\frac{1}{\gamma}) \end{pmatrix}$$
noting that this is a conjugation of $A^{\beta,\gamma}$. Consider the set of matrices $\A_{\beta,\gamma}=\{A^{\beta,\gamma}_n\}_{n \in \N \setminus \{4\}}$ where for $n \geq 5$,
$$A^{\beta, \gamma}_n= \begin{pmatrix} \frac{1}{\beta^n} & \frac{b}{\gamma^n} \\\frac{1}{\beta^n} & \frac{d}{\gamma^n} \end{pmatrix}. $$
We begin by establishing the irreducibility properties of $\A_{\beta,\gamma}$.

\begin{lma}\label{irred}
If if $I \subset \N \setminus \{4\}$ intersects $\{n \in \N: n \geq 5\}$ and $\gamma$ is taken sufficiently large, then $\{A^{\beta, \gamma}_i\}_{i \in I}$ is strongly irreducible.
\end{lma}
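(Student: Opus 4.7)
\emph{Overall strategy.} Each matrix $A_i^{\beta,\gamma}$ will turn out to have two distinct positive real eigenvalues, hence fixes exactly two lines (its eigenlines) and preserves no other finite set of lines: since $A_i^{\beta,\gamma}$ permutes any finite invariant family of lines, a sufficiently high power fixes each one, but that power also has only two eigenlines (the same ones as $A_i^{\beta,\gamma}$). Therefore any finite invariant union of lines for $A_i^{\beta,\gamma}$ is a subset of its eigenlines, and any common invariant finite union of lines for the whole family $\{A_i^{\beta,\gamma}\}_{i \in I}$ lies in the intersection of the corresponding eigenline sets. The plan is to show that, for $\gamma$ sufficiently large, this intersection is empty.

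\emph{Computing eigenlines.} The matrix $A_1^{\beta,\gamma} = A_2^{\beta,\gamma} = A_3^{\beta,\gamma}$ is presented as an explicit conjugate of $\mathrm{diag}(1/\beta,1/\gamma)$, so its eigenlines are spanned by the columns of the conjugating matrix: $L_1 = \langle (2,1)\rangle$ and $L_2 = \langle (-2,1)\rangle$, with slopes $\pm 1/2$. For $A_n^{\beta,\gamma}$ with $n \geq 5$, parametrising lines by slope $s \in \R \cup \{\infty\}$ and writing $\alpha = (\beta/\gamma)^n \in (0,1)$, the induced action is
\[ s \mapsto \frac{1 + ds\alpha}{1 + bs\alpha}, \]
so fixed slopes are the two roots of $b\alpha s^2 + (1 - d\alpha)s - 1 = 0$. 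A short discriminant calculation (using $b < d$) shows both roots are real; denoting them $s_\pm(\alpha)$ and expanding as $\alpha \to 0^+$, one finds $s_+(\alpha) = 1 + (d-b)\alpha + O(\alpha^2)$ and $s_-(\alpha) = -1/(b\alpha) + (d-b)/b + O(\alpha)$.

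\emph{Case analysis.} Split on whether $I \cap \{1,2,3\}$ is empty. If not, any common invariant union lies in $\{L_1,L_2\}$; picking any $n \in I$ with $n \geq 5$, the asymptotics above give $s_+(\alpha) \to 1$ and $|s_-(\alpha)| \to \infty$ as $\gamma \to \infty$, so neither slope equals $\pm 1/2$ for $\gamma$ large. (This case can alternatively be handled without any largeness hypothesis by a direct sign computation: using $1/2 < b < d < 1$ and $\gamma > \beta > 3$, each of the four equations $A_n^{\beta,\gamma} L_j = L_k$ reduces to an identity like $2(\gamma/\beta)^n = 2d - b$ which is ruled out by $2d - b < 3/2 < 2 < 2(\gamma/\beta)^n$ and similar comparisons.) If $I \cap \{1,2,3\} = \emptyset$, so $I \subseteq \{n \geq 5\}$, pick distinct $n_1 < n_2 \in I$ and set $\alpha_j = (\beta/\gamma)^{n_j}$ with $\alpha_1 \neq \alpha_2$. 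The leading behaviours $(d-b)\alpha$ of $s_+$ and $-1/(b\alpha)$ of $s_-$ are strictly monotonic in $\alpha$, and for $\gamma$ large the remainders are too small to spoil that monotonicity, so the four slopes $s_\pm(\alpha_1), s_\pm(\alpha_2)$ are pairwise distinct and the eigenline sets are disjoint.

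\emph{Main obstacle.} The technical heart is the second case: one needs to rule out that the $O$-corrections in the expansions conspire to make two of the four slopes coincide. The cleanest route is to differentiate the defining quadratic $b\alpha s^2 + (1 - d\alpha)s - 1 = 0$ implicitly, obtaining $s_\pm'(\alpha) = s_\pm(d - b s_\pm)/(2 b \alpha s_\pm + 1 - d \alpha)$, and to check these expressions are non-zero on a neighbourhood of $0$, yielding injectivity of $\alpha \mapsto s_\pm(\alpha)$ there. Since $(\beta/\gamma)^n \leq \beta/\gamma \to 0$ uniformly in $n \geq 5$ as $\gamma \to \infty$, a single threshold $\gamma_0$ then works uniformly for every admissible $I$.
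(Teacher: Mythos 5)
Your proof is correct and follows essentially the same route as the paper's: both reduce the problem to showing the matrices share no common eigenline, identify the invariant lines of $A_n^{\beta,\gamma}$ ($n\ge 5$) as the roots of the same quadratic (yours in the slope $s$ with small parameter $\alpha=(\beta/\gamma)^n$, the paper's in the eigenvector coordinate $v$ with parameter $\gamma^n/\beta^n$), rule out the slopes $\pm\tfrac12$ coming from $A_1^{\beta,\gamma}$, and prove injectivity of the roots in $n$ by a nonvanishing-derivative computation. The only substantive differences are that you pass from ``no common eigenline'' to \emph{strong} irreducibility via the distinct-positive-eigenvalues/power argument, whereas the paper invokes the fact that an irreducible family of positive matrices is strongly irreducible, and that your monotonicity of $s_\pm(\alpha)$ is established only near $\alpha=0$ and so needs $\gamma$ large, while the paper's version of the same derivative computation (nonvanishing of $G_\pm'$) works for all $\gamma>\beta$ --- both points are immaterial for the lemma as stated. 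One shared caveat: in the case $I\subseteq\{n\ge 5\}$ both arguments implicitly require $|I|\ge 2$ (a singleton $\{A_n^{\beta,\gamma}\}$ of course preserves the union of its own two eigenlines), an imprecision already present in the lemma's statement and in the paper's own proof, and harmless where the lemma is applied.
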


\begin{proof}
$A^{\beta, \gamma}_1$ has eigenvectors $\left\{\begin{pmatrix}1\\\frac{1}{2}\end{pmatrix}, \begin{pmatrix}  -1\\\frac{1}{2}\end{pmatrix}\right\}$. $\begin{pmatrix}1\\\frac{1}{2}\end{pmatrix}$ cannot be an eigenvector for $A_n^{\beta,\gamma}$ ($n \geq 5$), since this would imply that $\frac{1}{\beta^n}= \frac{d/2-b}{\gamma^n}$, which is impossible since $d/2-b<0$. $\begin{pmatrix}  -1\\\frac{1}{2}\end{pmatrix}$ cannot be an eigenvector of $A_n^{\beta,\gamma}$ ($n \geq 5$) since this would imply that $\frac{3}{\beta^n}=\frac{d/2+b}{\gamma^n}$ which is also impossible provided $\gamma$ is always taken sufficiently larger than $\beta$. Therefore we can always take $\gamma$ sufficiently large that $A_1^{\beta,\gamma}$ does not share any eigenvectors with $A_n^{\beta,\gamma}$ for any $n \geq 5$. Next, we also claim that given $n >m \geq 5$, $A_n^{\beta,\gamma}$ and $A_m^{\beta,\gamma}$ do not share any common eigenvectors. To see this, notice that the positive eigenvector for $A_n^{\beta,\gamma}$ is given by $\begin{pmatrix}1\\v_+\end{pmatrix}$ and the other eigenvector is given by $\begin{pmatrix}1\\v_-\end{pmatrix}$ where $v_+$ and $v_-$ are the positive and negative solutions to the quadratic equation
$$bv^2+v\left(\frac{\gamma^n}{\beta^n} -d\right)-\frac{\gamma^n}{\beta^n}=0$$
which are given by
$$v_\pm= \frac{d-\frac{\gamma^n}{\beta^n} \pm \sqrt{(\frac{\gamma^n}{\beta^n}-d)^2+4b\frac{\gamma^n}{\beta^n}}}{2b}.$$
Hence it is enough to check that $G_\pm'(x) \neq 0$ for $x \geq 1$ where 
$$G_\pm(x):= -f(x) \pm \sqrt{(f(x)-d)^2+4bf(x)}$$
and $f(x):= (\frac{\gamma}{\beta})^x$. Since
$$G_\pm'(x)=f'(x)\left(-1 \pm \frac{f(x)-d}{\sqrt{(f(x)-d)^2+4bf(x)}}\right)$$
and $f(x)>0$ for $x \geq 1$, it is clear that $G_\pm'(x) \neq 0$ for any $x \geq 1$.
Therefore, if $I \subset \N \setminus \{4\}$ intersects $\{n \in \N: n \geq 5\}$ then $\{A^{\beta, \gamma}_i\}_{i \in I}$ is irreducible and therefore strongly irreducible (since any set of positive irreducible matrices is automatically strongly irreducible).
\end{proof}

 Throughout this section we will denote $\I=\{1,2,3\}$ and $\I'=\{1,2,5,6, \ldots\}$. Given $I \subset \N \setminus \{4\}$, we let $s_{\beta, \gamma}(I)$ denote the root of the pressure function
$$P_I^{\beta, \gamma}(s)= \lim_{n \to \infty} \left( \sum_{i \in I^n} \phi^s(A_\i^{\beta, \gamma})\right)^{\frac{1}{n}}.$$ In particular $s_{\beta,\gamma}(\I)=\frac{\log 3}{\log \beta}<1$. Notice that we can choose $c>0$ sufficiently small that for any $\gamma>\beta$ sufficiently large,
\begin{eqnarray}
\min \left\{\frac{(A_k^{\beta, \gamma})_{(i,j)}}{(A_k^{\beta, \gamma})_{(i',j)}} \; : \; k \in \N \setminus \{4\}\right\} \geq c. \label{column}
\end{eqnarray}
We always assume that $\gamma>\beta$ is sufficiently large that: (i) (\ref{column}) holds, (ii) $A_1^{\beta,\gamma}$ is a positive matrix and (iii) for any $(i,j) \in \{1,2\}^2$, $(A_1^{\beta,\gamma})_{(i,j)} \geq (A_5^{\beta,\gamma})_{(i,j)}$ and (iv) Lemma \ref{irred} holds. 

Consider an IFS $\F$ whose linear parts coincide with $\A_{\beta, \gamma}$ and which satisfies the SOSC. Observe that by Theorem \ref{bhr}, for all $I \subset \N \setminus \{4\}$ where $I$ is not contained in $\I$, we have $\hd F_I=s_{\beta, \gamma}(I)$. On the other hand, for $I \in \{\{1,2\}, \{1,3\}, \{2,3\}, \I\}$, we only know that $\hd F_I \leq s_{\beta, \gamma}(I)$ and its exact value depends on the translations in the IFS $\F$. We will exploit this to construct an IFS $\F$ for which $D(\F)$ is not compact .

We begin by studying the structure of $D(\A_{\beta, \gamma})$; in particular we show that for some choice of $\beta$ and $\gamma$: (i) there exists $\epsilon>0$ for which $D(\A_{\beta, \gamma}) \cap (s_{\beta, \gamma}(\I)-\epsilon, s_{\beta, \gamma}(\I))= \emptyset$, (ii) for any $\delta>0$, $D(\A_{\beta, \gamma}) \cap (s_{\beta, \gamma}(\I), s_{\beta, \gamma}(\I)+\delta)\neq \emptyset$ and (iii) for any $I \subset \N \setminus \{4\}$ where $I \neq \I$, we have $s_{\beta, \gamma}(I) \neq s_{\beta, \gamma}(\I)$. Throughout the rest of the section we fix some $\eta \in (0,1)$ and $K=\frac{8}{c \eta}$.

\begin{lma}\label{sI}
For all $\beta>3$ sufficiently large, 
$$\beta^{2s_{\beta,\gamma}(\I)}-\beta^{s_{\beta,\gamma}(\I)}>K^{s_{\beta,\gamma}(\I)}.$$
\end{lma}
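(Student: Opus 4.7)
The plan is to first compute $s_{\beta,\gamma}(\I)$ explicitly and observe that it equals $\frac{\log 3}{\log \beta}$, whence the left-hand side $\beta^{2s_{\beta,\gamma}(\I)} - \beta^{s_{\beta,\gamma}(\I)}$ collapses to the constant $9 - 3 = 6$, independent of $\gamma$. The inequality then becomes $6 > K^{s_{\beta,\gamma}(\I)}$, and since $s_{\beta,\gamma}(\I) \to 0$ as $\beta \to \infty$, we have $K^{s_{\beta,\gamma}(\I)} \to 1 < 6$, so the inequality holds for all sufficiently large $\beta$.

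The only nontrivial step is the identification $s_{\beta,\gamma}(\I) = \frac{\log 3}{\log \beta}$. To get this, I would note that $A_1^{\beta,\gamma} = A_2^{\beta,\gamma} = A_3^{\beta,\gamma}$ is conjugate to the diagonal matrix $A^{\beta,\gamma} = \mathrm{diag}(1/\beta, 1/\gamma)$, so every product $A_\i^{\beta,\gamma}$ for $\i \in \I^n$ equals $(A_1^{\beta,\gamma})^n$. Hence
\[
P_{\I}^{\beta,\gamma}(s) = \lim_{n \to \infty} \bigl(3^n \phi^s((A_1^{\beta,\gamma})^n)\bigr)^{1/n} = 3 \lim_{n \to \infty} \phi^s((A_1^{\beta,\gamma})^n)^{1/n}.
\]
For $s \in [0,1]$, $\phi^s(B) = \|B\|^s$, and by Gelfand's formula $\|(A_1^{\beta,\gamma})^n\|^{1/n} \to \rho(A_1^{\beta,\gamma}) = 1/\beta$. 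Therefore $P_{\I}^{\beta,\gamma}(s) = 3 \beta^{-s}$ on $[0,1]$, and setting this equal to $1$ yields $s_{\beta,\gamma}(\I) = \frac{\log 3}{\log \beta}$, which lies in $(0,1)$ for $\beta > 3$, consistent with the assumption.

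Plugging this back in, $\beta^{s_{\beta,\gamma}(\I)} = 3$ and $\beta^{2s_{\beta,\gamma}(\I)} = 9$, so the desired inequality reduces to
\[
6 > K^{\log 3 / \log \beta},
\]
equivalently $\log \beta > \frac{\log K \cdot \log 3}{\log 6}$. Since $K = \frac{8}{c\eta}$ is a fixed constant (depending only on the chosen $\eta$ and the column-ratio constant $c$ from \eqref{column}), this is achieved for all $\beta$ above an explicit threshold. I do not anticipate any obstacle beyond this direct computation; the only mild subtlety is justifying that $P_{\I}^{\beta,\gamma}$ is determined by the spectral radius rather than the operator norm of $A_1^{\beta,\gamma}$ itself, which is handled by the standard $n$th-root limit.
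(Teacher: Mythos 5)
Your proof is correct and is essentially the paper's argument: the paper likewise uses $s_{\beta,\gamma}(\I)=\frac{\log 3}{\log\beta}$ (asserted just before the lemma) to reduce the left-hand side to $6$ and notes that $K^{\log 3/\log\beta}\to 1$ as $\beta\to\infty$. Your Gelfand-formula justification of $P^{\beta,\gamma}_{\I}(s)=3\beta^{-s}$ simply fills in a step the paper states without proof.
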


\begin{proof}
Since $s_{\beta,\gamma}(\I)= \frac{\log 3}{\log \beta}$, the left hand side of the above inequality equals 6 whereas the right hand side is $K^{\frac{\log 3}{\log \beta}}$, from which the proof immediately follows.
\end{proof} 

From now on we fix $\beta$ sufficiently large such that Lemma \ref{sI} holds.

\begin{lma} \label{crucial}
For all $\gamma$ sufficiently large,
\begin{eqnarray}
s_{\beta, \gamma}(\I')<s_{\beta, \gamma}(\I). \label{crucial eqn} \end{eqnarray}
\end{lma}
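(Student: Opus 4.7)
The plan is to show $P_{\I'}^{\beta,\gamma}(s_{\beta,\gamma}(\I)) < 1 = P_\I^{\beta,\gamma}(s_{\beta,\gamma}(\I))$, from which (\ref{crucial eqn}) follows by strict monotonicity of the pressure function in $s$. To compute $P_\I^{\beta,\gamma}$ exactly, I observe that $A_1^{\beta,\gamma}=A_2^{\beta,\gamma}=A_3^{\beta,\gamma}$ is a conjugate of the diagonal matrix $A^{\beta,\gamma}$, so $\norm{(A_1^{\beta,\gamma})^n}^{1/n}\to 1/\beta$ as $n \to \infty$. Since $s_{\beta,\gamma}(\I) = \log 3/\log\beta \in (0,1)$ (using $\beta>3$) and $\phi^s = \norm{\cdot}^s$ on this range by (\ref{svf2}), this yields $P_\I^{\beta,\gamma}(s) = 3\beta^{-s}$ and in particular $P_\I^{\beta,\gamma}(s_{\beta,\gamma}(\I)) = 1$. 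The identical computation with the factor $3$ replaced by $2$ gives $P_{\{1,2\}}^{\beta,\gamma}(s_{\beta,\gamma}(\I)) = 2/3$.

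To bound $P_{\I'}^{\beta,\gamma}(s_{\beta,\gamma}(\I))$ from above, I would apply the upper estimate of Lemma \ref{positive change} with $\{1,2\}$ in the role of $\I$ and $\{5,6,\ldots\}$ in the role of $\J$; the hypotheses are valid since, for $\gamma$ sufficiently large, all matrices involved are positive and the standing assumption (\ref{column}) ensures $\kappa(\{1,2\}) \geq c > 0$ uniformly in $\gamma$. A direct entry computation gives $\norm{A_i^{\beta,\gamma}}' = 2(2/\beta^i + (b+d)/\gamma^i) \leq 8/\beta^i$ for $\gamma \geq \beta$, and together with $\beta^{s_{\beta,\gamma}(\I)}=3$ the tail reduces to a convergent geometric series
\[
\sum_{i\geq 5}\bigl(\norm{A_i^{\beta,\gamma}}'\bigr)^{s_{\beta,\gamma}(\I)} \leq 8^{s_{\beta,\gamma}(\I)}\sum_{i\geq 5}3^{-i} = \frac{8^{s_{\beta,\gamma}(\I)}}{162}.
\]
Combining the two contributions,
\[
P_{\I'}^{\beta,\gamma}(s_{\beta,\gamma}(\I)) \leq \frac{2}{3} + \frac{(16/c)^{s_{\beta,\gamma}(\I)}}{162}.
\]

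Since $s_{\beta,\gamma}(\I) = \log 3/\log \beta$ tends to $0$ as $\beta$ grows, and $\beta$ was already chosen sufficiently large in Lemma \ref{sI} (with room for further strengthening if needed), the factor $(16/c)^{s_{\beta,\gamma}(\I)}$ can be made arbitrarily close to $1$, which brings the correction term strictly below $1/3$. This gives $P_{\I'}^{\beta,\gamma}(s_{\beta,\gamma}(\I)) < 1$, and strict monotonicity of the pressure function then delivers $s_{\beta,\gamma}(\I') < s_{\beta,\gamma}(\I)$. The only real subtlety is coordinating the lower bound on $\beta$ required here with the one coming from Lemma \ref{sI}, but since both are ``for $\beta$ sufficiently large'' conditions they coexist without tension; the argument is otherwise just a direct application of Lemma \ref{positive change} to a sum that happens to be geometric.
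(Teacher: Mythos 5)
Your proposal is correct and follows the same basic route as the paper: both apply the upper bound of Lemma \ref{positive change} with $\{1,2\}$ as the base system and $\{5,6,\ldots\}$ as the added digits, use $P^{\beta,\gamma}_{\{1,2\}}(s_{\beta,\gamma}(\I))=2/3$, and then show the tail contributes strictly less than $1/3$. The difference is in how the tail is controlled. The paper splits $(\norm{A^{\beta,\gamma}_m}')^s$ into a $\beta^{-ms}$ part and a $\gamma^{-ms}$ part, kills the first using precisely the inequality $\beta^{2s}-\beta^s>K^s$ from Lemma \ref{sI} (that is what that lemma is for), and sends the second to $0$ as $\gamma\to\infty$; you instead absorb everything into a single geometric series in $\beta$ via $\gamma\geq\beta$, getting the clean bound $2/3+(16/c)^s/162$, so you need neither Lemma \ref{sI} nor the limit $\gamma\to\infty$ for this step. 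The one point to tighten is the quantifier order: in the paper $\beta$ is already fixed (via Lemma \ref{sI}) before this lemma is stated, so you are not free to enlarge $\beta$ further at this stage without disturbing the surrounding construction. Fortunately your extra requirement $(16/c)^s<54$ is automatic for any $\beta$ satisfying Lemma \ref{sI}: since $16/c=2K\eta$ with $K=8/(c\eta)$, and Lemma \ref{sI} gives $K^s<\beta^{2s}-\beta^s=6$, one gets $(16/c)^s=2^s\eta^sK^s<2\cdot 6=12<54$. With that observation inserted, your argument proves the lemma exactly as stated, for the already-fixed $\beta$ and all $\gamma\geq\beta$ large enough for the standing positivity and column-ratio assumptions.
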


\begin{proof}
Denote $s=s_{\beta,\gamma}(\I)$. By Lemma \ref{positive change},
\begin{eqnarray}
P^{\beta, \gamma}_{\I'}(s) &\leq& P^{\beta, \gamma}_{\{1,2\}}(s)+ \sum_{m=5}^{\infty} \frac{4^s (\norm{A^{\beta, \gamma}_m}')^s}{c^s} \nonumber\\
&\leq&  P^{\beta, \gamma}_{\{1,2\}}(s)+ \left(\frac{8}{c}\right)^s \sum_{m=5}^{\infty} \frac{1}{\beta^{ms}}+ \left(\frac{4}{c}\right)^s \sum_{m=5}^{\infty}\frac{(b+d)^s}{\gamma^{ms}} \nonumber\\
&=&P^{\beta, \gamma}_{\{1,2\}}(s)+ \frac{K^s \eta^s}{\beta^{4s}} \frac{1}{\beta^s-1}+\left(\frac{4}{c}\right)^s \sum_{m=5}^{\infty}\frac{(b+d)^s}{\gamma^{ms}} \nonumber \\
&<& P^{\beta, \gamma}_{\{1,2\}}(s)+ \frac{\eta^s}{\beta^{3s}} + \left(\frac{4}{c}\right)^s \sum_{m=5}^{\infty}\frac{(b+d)^s}{\gamma^{ms}}, \label{crucial bound}
\end{eqnarray}
where the final inequality follows because $\beta^{2s}-\beta^s>K^s$. Note that $P^{\beta, \gamma}_{\{1,2\}}(s)+ \frac{\eta^s}{\beta^{3s}}=\frac{2}{\beta^s}+\frac{\eta^s}{\beta^{3s}}<\frac{3}{\beta^s}=1$. As $\gamma \to \infty$ the third term in (\ref{crucial bound}) tends to 0, hence
$$P^{\beta, \gamma}_{\{1,2\}}(s)+ \frac{\eta^s}{\beta^{3s}} + \left(\frac{4}{c}\right)^s \sum_{m=5}^{\infty}\frac{(b+d)^s}{\gamma^{ms}}<1$$
for sufficiently large $\gamma$. In particular we have $s_{\beta, \gamma}(\I')<s_\beta(\I)$.
\end{proof}

The following straightforward result will allow us to compare the pressure of different subsystems.

\begin{prop} \label{compare}
Suppose $n >m$ and $I \subset \N \setminus \{4\}$ with $n, m \notin I$. Then 
$$s_{\beta, \gamma}(I \cup\{n\}) \leq s_{\beta, \gamma}(I \cup \{m\}).$$
\end{prop}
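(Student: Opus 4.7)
The plan is to establish the pointwise pressure inequality
\begin{equation*}
P^{\beta,\gamma}_{I \cup \{n\}}(s) \leq P^{\beta,\gamma}_{I \cup \{m\}}(s) \qquad \text{for all } s \geq 0,
\end{equation*}
which yields the claim, since the affinity dimension $\inf\{s : P(s) \leq 1\}$ is monotone in $P$. The transposition $n \leftrightarrow m$ induces, for each $k \in \N$, a bijection $\i \mapsto \i'$ between $(I \cup \{n\})^k$ and $(I \cup \{m\})^k$ obtained by replacing each occurrence of $n$ in $\i$ by $m$. Summing term by term and taking $k \to \infty$, the pressure comparison reduces to the single-word bound $\phi^s(A_\i^{\beta,\gamma}) \leq \phi^s(A_{\i'}^{\beta,\gamma})$, which I will prove for every $\i$ and every $s \geq 0$.

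The first step is the entrywise comparison $A_n^{\beta,\gamma} \leq A_m^{\beta,\gamma}$, by cases on $n > m$. When $n > m \geq 5$, every entry of $A_\bullet^{\beta,\gamma}$ is one of $1/\beta^\bullet$, $b/\gamma^\bullet$, $d/\gamma^\bullet$, each of which is decreasing in the subscript. When $n \geq 5$ and $m \in \{1,2,3\}$, the standing assumption (iii) gives $A_1^{\beta,\gamma} \geq A_5^{\beta,\gamma}$ entrywise, and combining with the previous case gives $A_m^{\beta,\gamma} = A_1^{\beta,\gamma} \geq A_5^{\beta,\gamma} \geq A_n^{\beta,\gamma}$. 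When $n, m \in \{1,2,3\}$ the matrices coincide. Since every matrix in $\A_{\beta,\gamma}$ has non-negative entries, sandwiching a smaller non-negative matrix between fixed non-negative factors preserves the entrywise inequality, so replacing the occurrences of $n$ in $\i$ by $m$ one at a time yields $A_\i^{\beta,\gamma} \leq A_{\i'}^{\beta,\gamma}$ entrywise.

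The second step is to upgrade the entrywise bound to a $\phi^s$ inequality via (\ref{svf2}). In each regime of $s$, $\phi^s$ is a product of non-negative powers of $\norm{\cdot}$ and $|\det(\cdot)|$, so I handle the two factors separately. Multiplicativity of the determinant reduces the determinant part to $|\det A_n^{\beta,\gamma}| \leq |\det A_m^{\beta,\gamma}|$, which is immediate from the explicit values $|\det A_n^{\beta,\gamma}| = (d-b)/(\beta^n \gamma^n)$ for $n \geq 5$ and $|\det A_1^{\beta,\gamma}| = 1/(\beta\gamma)$, combined with $b, d \in (1/2, 1)$ and $\beta, \gamma > 1$. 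For the operator norm, the entrywise bound transfers to the Gramians, $A_\i^T A_\i \leq A_{\i'}^T A_{\i'}$ entrywise, and the Perron--Frobenius theorem applied to these non-negative symmetric matrices yields $\rho(A_\i^T A_\i) \leq \rho(A_{\i'}^T A_{\i'})$, i.e.\ $\norm{A_\i^{\beta,\gamma}} \leq \norm{A_{\i'}^{\beta,\gamma}}$. Substituting these two bounds into (\ref{svf2}) completes the single-word inequality uniformly in $s$.

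The one place requiring a touch of care is the regime $s \in (1,2]$: entrywise dominance of non-negative $2 \times 2$ matrices does \emph{not} in general preserve the determinant (raising an off-diagonal entry can only shrink it), so the argument genuinely uses multiplicativity of $\det$ together with the direct comparison of $|\det A_n^{\beta,\gamma}|$ and $|\det A_m^{\beta,\gamma}|$, rather than deducing the determinant bound from the entrywise one. Once this subtlety is absorbed the remainder of the argument is straightforward bookkeeping.
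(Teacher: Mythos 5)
Your proof is correct and takes essentially the same route as the paper: the transposition $n\mapsto m$ gives a bijection between $(I\cup\{n\})^k$ and $(I\cup\{m\})^k$ under which the products of matrices only increase entrywise, and the pressure comparison follows. The paper simply records the entrywise bound and the resulting inequality $\norm{A_\i^{\beta,\gamma}}'\leq\norm{A_\j^{\beta,\gamma}}'$, which suffices in context because all the roots being compared lie in $(0,1)$ where $\phi^s(\cdot)=\norm{\cdot}^s$; your extra care with the determinant factor (via multiplicativity of $\det$ and the explicit values $(d-b)/(\beta^n\gamma^n)$ and $1/(\beta\gamma)$) extends the pointwise inequality to all $s\geq 0$, which is a harmless strengthening of what the paper actually proves.
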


\begin{proof}
For any $\i \in (I \cup \{n\})^k$, notice that we can replace every instance of the digit $n$ with $m$ to obtain a unique word $\j \in (I \cup \{m\})^k$. Therefore, for any $(i,j) \in \{1,2\}^2$, $(A^{\beta, \gamma}_\i)_{(i.j)} \leq (A^{\beta, \gamma}_\j)_{(i,j)}$. In particular $\norm{A^{\beta, \gamma}_\i}' \leq \norm{A^{\beta, \gamma}_\j}'$. Therefore it immediately follows that $P^{\beta, \gamma}_{I \cup\{n\}}(s) \leq P^{\beta, \gamma}_{I \cup \{m\}}(s)$ and hence $s_{\beta, \gamma}(I \cup\{n\}) \leq s_{\beta, \gamma}(I \cup \{m\})$.
\end{proof}

We are now ready to show that $(s_{\beta, \gamma}(\I'), s_{\beta, \gamma}(\I)) \cap D(\A_{\beta, \gamma})=\emptyset$ and that the point $s_{\beta, \gamma}(\I)$ is uniquely attained in the affinity spectrum.

\begin{lma} \label{unique hole}
Let $\gamma$ be sufficiently large that Lemma \ref{crucial} holds. Then $(s_{\beta, \gamma}(\I'), s_{\beta, \gamma}(\I)) \cap D(\A_{\beta, \gamma})=\emptyset$. Moreover, for any $I \subset \N \setminus \{4\}$ where $I \neq \I$ we have $s_{\beta, \gamma}(I) \neq s_{\beta, \gamma}(\I)$.
\end{lma}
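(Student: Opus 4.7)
The plan is to establish the following dichotomy: for every $I \subset \N \setminus \{4\}$ with $I \neq \I$, either $s_{\beta,\gamma}(I) > s_{\beta,\gamma}(\I)$ or $s_{\beta,\gamma}(I) \leq s_{\beta,\gamma}(\I')$. Since Lemma \ref{crucial} gives $s_{\beta,\gamma}(\I') < s_{\beta,\gamma}(\I)$, this dichotomy delivers both conclusions simultaneously: $s_{\beta,\gamma}(I)$ cannot lie in $(s_{\beta,\gamma}(\I'),\, s_{\beta,\gamma}(\I))$, and in particular $s_{\beta,\gamma}(I) \neq s_{\beta,\gamma}(\I)$. I would split the cases according to $k := |I \cap \{1,2,3\}|$.

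In the case $k = 3$ we have $I \supsetneq \I$, so $I$ contains some digit $n \geq 5$. The aim is to apply Lemma \ref{bounded change2} with base set $\I$ and $\J = I \setminus \I$; its strict lower bound yields $P_\I(s) < P_I(s)$ at every $s > 0$, hence $s_{\beta,\gamma}(I) > s_{\beta,\gamma}(\I)$. The only hypothesis to verify is quasimultiplicativity of $\phi^s$ on $\I$. Proposition \ref{feng quasi} does not apply directly since $\{A_1, A_2, A_3\} = \{A_1^{\beta,\gamma}\}$ is reducible. However, $A_1^{\beta,\gamma}$ is conjugate to $\mathrm{diag}(\beta^{-1}, \gamma^{-1})$, so $\phi^s(A_\i) \asymp \phi^s(A_1^{\beta,\gamma})^{|\i|}$ uniformly in $\i \in \I^*$; consequently $\phi^s$ is in fact \emph{almost} multiplicative on $\I$, a fortiori quasimultiplicative.

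In the case $k \leq 2$, the key observation is that, because $A_1^{\beta,\gamma} = A_2^{\beta,\gamma} = A_3^{\beta,\gamma}$, the pressure $P_I(s)$ is invariant under any bijective relabelling of the letters of $I \cap \{1,2,3\}$ by a subset of $\{1,2,3\}$ of the same cardinality. I would fix a bijection $\sigma : I \to I'$ which acts as the identity on $I \setminus \{1,2,3\}$ and sends $I \cap \{1,2,3\}$ into $\{1,2\}$, so that $I' \subseteq \I'$. Then term-by-term $A_{i_1}^{\beta,\gamma} \cdots A_{i_n}^{\beta,\gamma} = A_{\sigma(i_1)}^{\beta,\gamma} \cdots A_{\sigma(i_n)}^{\beta,\gamma}$ for every word $i_1 \ldots i_n \in I^n$, so $P_I^{\beta,\gamma}(s) = P_{I'}^{\beta,\gamma}(s)$ and hence $s_{\beta,\gamma}(I) = s_{\beta,\gamma}(I')$. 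Monotonicity of the pressure in the index set then gives $s_{\beta,\gamma}(I) = s_{\beta,\gamma}(I') \leq s_{\beta,\gamma}(\I')$, as required.

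The only conceptual step that requires genuine care is checking quasimultiplicativity of $\phi^s$ on $\I = \{1,2,3\}$ in the first case, since the standard tool (Proposition \ref{feng quasi}) does not apply; once this has been noted, the remainder is a routine case split based on $k$, combined with the labelling symmetry $A_1^{\beta,\gamma} = A_2^{\beta,\gamma} = A_3^{\beta,\gamma}$ and monotonicity of the pressure.
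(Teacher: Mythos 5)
Your proof is correct and takes essentially the same route as the paper: a case split on $I\cap\{1,2,3\}$, the relabelling symmetry $A_1^{\beta,\gamma}=A_2^{\beta,\gamma}=A_3^{\beta,\gamma}$ plus monotonicity of the pressure to force $s_{\beta,\gamma}(I)\le s_{\beta,\gamma}(\I')$ when $|I\cap\{1,2,3\}|\le 2$, and strict monotonicity of the pressure when $I\supsetneq\I$ (the paper obtains this from Lemma \ref{positive change}, using positivity of the matrices and $\kappa(\I)>0$, rather than your conjugation argument, but both routes amount to almost multiplicativity of $\phi^s$ on $\I$). One small imprecision: the uniform comparison should read $\phi^s(A_\i)\asymp\phi^s(A^{\beta,\gamma})^{|\i|}$ with the \emph{diagonal} matrix $A^{\beta,\gamma}$, for which $\phi^s$ is exactly multiplicative, rather than $\phi^s(A_1^{\beta,\gamma})^{|\i|}$, which differs from it by an exponential factor in $|\i|$; the almost-multiplicativity conclusion you draw is unaffected.
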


\begin{proof}
Fix $I \subset \N \setminus \{4\}$. We consider the following four cases.

\emph{Case 1:} If $\{1,2\} \subset I$ then either (a) $3 \in I$ and therefore $s_{\beta, \gamma}(I) \geq s_{\beta, \gamma}(\I)$, or (b) $3 \notin I$ and then $s_{\beta, \gamma}(I) \leq s_{\beta, \gamma}(\I')$. 

\emph{Case 2:} If $1,2 \notin I$ then $s_{\beta, \gamma}(I)\leq s_{\beta, \gamma}(\{3,5,6,7, \ldots\}) < s_{\beta, \gamma}(\{2,5,6,7,\ldots\})<s_{\beta, \gamma}(\I')$. 

\emph{Case 3:} If $1 \in I$ but $2 \notin I$, then $s_{\beta, \gamma}(I) \leq s_{\beta, \gamma}(\{1,3,5,6,7,\ldots\}) < s_{\beta, \gamma}(\{1,2,5,6,7,\ldots\})=s_{\beta, \gamma}(\I')$.

\emph{Case 4:} If $1 \notin I$ but $2 \in I$ then $s_{\beta, \gamma}(I) \leq s_{\beta, \gamma}(\{2,3,5,6,7,\ldots\}) \leq s_{\beta, \gamma}(\{1,2,5,6,7,\ldots\})=s_{\beta, \gamma}(\I')$.

This proves that $(s_{\beta, \gamma}(\I'), s_{\beta, \gamma}(\I))\cap D(\A_{\beta, \gamma})=\emptyset$. To see that $s_{\beta, \gamma}(\I)$ is uniquely attained in $D(\A_{\beta, \gamma})$, let $I \subset \N \setminus \{4\}$ such that $s_{\beta, \gamma}(I) \geq s_{\beta, \gamma}(\I)$. From examining the above cases we see that necessarily $\{1,2,3\} \subset I$. Therefore, either $I=\I$, or there must exist $n \in \N \setminus \{1,2,3,4\}$ such that $n \in I$. In the latter case, we know by Lemma \ref{positive change} that $P^{\beta, \gamma}_I(s)>P^{\beta, \gamma}_\I(s)$ for any $s \in (0,1)$ hence $s_{\beta, \gamma}(I)>s_{\beta, \gamma}(\I)$. 
\end{proof}

We are now ready to prove Theorem \ref{main}.

\vspace{2mm}

\noindent \emph{Proof of Theorem \ref{main}.} By Lemma \ref{unique hole} and Theorem \ref{bhr}, it is sufficient to show that we can find translational parts $u_1, u_2, u_3 \in \R^2$ such that $\{A_i^{\beta,\gamma} (\cdot) +u_i\}_{i \in \I}$ satisfies the SOSC and its attractor $F_\I$ has dimension $\hd F_\I<s_{\beta, \gamma}(\I)= \frac{\log 3}{\log \beta}$. By conjugating, it is sufficient to show that we can choose translations $t_1, t_2, t_3 \in \R^2$ in such a way that the IFS $\{A^{\beta,\gamma}(\cdot)+t_i\}_{i \in \I}$ satisfies the SOSC and its attractor $F_\I'$ has dimension $\hd F_\I'< s_{\beta,\gamma}(\I)=\frac{\log 3}{\log \beta}$. Now, let $t_i= \begin{pmatrix} 0\\ \frac{i-1}{\gamma}\end{pmatrix}$ for $i=1,2,3$. It is easy to see that the SOSC is satisfied. Moreoever, $F_\I'$ is just a rotated copy of the attractor of the self-similar IFS $\{\frac{1}{\gamma}x+ \frac{i-1}{\gamma}\}_{i \in \I}$, which has dimension $\frac{\log 3}{\log \gamma}< \frac{\log 3}{\log \beta}=s_{\beta,\gamma}(\I)$. Therefore we are done. \qed

\begin{rem} With some more work, one can show that it is possible to choose $\beta, \gamma$ and the translational parts in such a way that $\hd F_\I \in (s_{\beta, \gamma}(\I'), s_{\beta,\gamma}(\I))$, showing that $\hd F_\I $ is an isolated point in the dimension spectrum of an appropriate self-affine IFS $\mathcal{F}$. We do not pursue this however, and instead prove Theorem \ref{isol} using a much simpler construction in the next section.
\end{rem}

\subsection{Example of a dimension spectrum containing an isolated point} \label{annoying}

In this section we construct a self-affine IFS whose dimension spectrum contains an isolated point, thus proving Theorem \ref{main}. Note that the example studied in this section falls into a class of IFS studied by Reeve in \cite{reeve}, although we will not require any of his results for our analysis.

Let 
\begin{eqnarray*}
S_1 \begin{pmatrix} x\\y \end{pmatrix} &=&\begin{pmatrix} \frac{1}{3} &0 \\0& \frac{1}{4} \end{pmatrix} \begin{pmatrix} x\\y \end{pmatrix}\\
S_2 \begin{pmatrix} x\\y \end{pmatrix} &=&\begin{pmatrix} \frac{1}{3} &0 \\0& \frac{1}{4} \end{pmatrix} \begin{pmatrix} x\\y \end{pmatrix}+ \begin{pmatrix} 0\\ \frac{1}{2}\end{pmatrix} \\
\end{eqnarray*}
and for $n \geq 3$,
\begin{eqnarray*}
S_n \begin{pmatrix} x\\ y\end{pmatrix}&=& \begin{pmatrix} \frac{1}{3} &0 \\0& a_n \end{pmatrix} \begin{pmatrix} x\\y \end{pmatrix}+ \begin{pmatrix} \frac{2}{3}\\ t_n\end{pmatrix}
\end{eqnarray*}
 where $a_n \in (0,\frac{1}{3})$ are decreasing in $n$ and the vertical component $t_n$ for the translational part of $S_n$ can be chosen arbitrarily provided that each $S_n([0,1]^2) \subseteq [0,1]^2$ and $S_n([0,1]^2) \cap S_m([0,1]^2) =\emptyset$ for $n \neq m$. Additionally we assume that the entries $a_n$ are sufficiently small so that the solution $s_0$ to the equation $\sum_{n=3}^\infty a_n^{s_0}=1$ satisfies $ s_0<\frac{\log 2}{\log 4}$.

\vspace{2mm}

\noindent \textit{Proof of Theorem \ref{isol}.} 
The attractor of $\{S_1, S_2\}$  is just the self-similar attractor of $\{\frac{1}{4}x, \frac{1}{4}x+\frac{1}{2}\}$ rotated anticlockwise by 90 degrees, so that it lies on the $y$-axis. Therefore $\hd F_{\{1,2\}}=\frac{\log 2}{\log 4}$. If $\I \subset \{3,4,5, \ldots\}$ then the attractor of $\{S_i\}_{i \in \I}$ is just a rotated and translated copy of the self-similar attractor of $\{a_ix+t_i\}_{i \in \I}$, hence $\hd F_\I \leq s_0<\frac{\log 2}{\log 4}$. Finally, if $\I$ intersects both $\{3,4,5,\ldots\}$ and $\{1,2\}$, then the dimension of the attractor of $\{S_i\}_{i \in \I}$ is bounded below by the dimension of its projection to the $x$-axis, which is simply the middle-third Cantor set, hence $\hd F_\I \geq \frac{\log 2}{\log 3}$. It follows that $\hd F_{\{1,2\}}$ is an isolated point in the dimension spectrum. \qed

\begin{rem}
\emph{Finite} subsystems of $\{S_n\}_{n \in \N}$ fall into the class of self-affine IFS that were studied by Lalley and Gatzouras in \cite{lg}, who obtained an explicit formula for the Hausdorff dimension in \cite[Proposition 3.3]{lg} in terms of a variational principle. To compute the Hausdorff dimension of infinite subsystems of $\{S_n\}_{n \in \N}$, this formula can be combined with \cite[Theorem 1]{reeve}. Examining the formula for the Hausdorff dimension of subsystems of $\{S_n\}_{n \in \N}$, there do not appear to be any obvious violations to compactness of the dimension spectrum, although the nature of the formula (in that it is expressed as a maximum over dimensions of projected Bernoulli probability measures) makes this difficult to conclude without further analysis.
\end{rem}

\noindent \textbf{Acknowledgements.}  The author was financially supported by the \emph{Leverhulme Trust} (Research Project Grant number RPG-2016-194) and by the \emph{EPSRC} (Standard Grant EP/R015104/1).

\end{document}